\documentclass[11pt]{amsart}   

\usepackage[latin1]{inputenc}
\usepackage[T1]{fontenc}

\usepackage{amsmath}
\usepackage{amssymb}
\usepackage{amsthm}
\usepackage{mathabx}
\usepackage{MnSymbol}

\usepackage{color}
\usepackage{tikz}

\theoremstyle{plain}
\newtheorem{thm}{Theorem}[section]
\newtheorem{lem}[thm]{Lemma}
\newtheorem{prop}[thm]{Proposition}
\newtheorem{cor}[thm]{Corollary}

\theoremstyle{definition}

\theoremstyle{remark}

\newcommand{\pletH}[3]{b_{#1,\;#2}^{#3}}

\author[L. Colmenarejo]{Laura Colmenarejo} 
\thanks{The author has been partially supported by  MTM2010-19336, P12-FQM-2696, FQM-333 and FEDER}

\title[Stability properties of Plethysm]{\textsc{Stability properties of the plethysm: \\ A combinatorial approach}}
\email{laurach@us.es}

\begin{document}

\begin{abstract}
An important family of structural constants in the theory of symmetric functions and in the representation
theory of symmetric groups and general linear groups are the \emph{plethysm coefficients}. In 1950, Foulkes observed that
they have some stability properties: certain sequences of plethysm coefficients are eventually constant. Such stability
properties were proven by Brion with geometric techniques, and by Thibon and Carr\'e by means of vertex operators.

In this paper we present a new approach to prove such stability properties. Our proofs are purely combinatorial and
follow the same scheme. We decompose plethysm coefficients in terms of other plethysm coefficients related to the
complete homogeneous basis of symmetric functions. We show that these other plethysm coefficients count integer
points in polytopes and we prove stability for them by exhibiting bijections between the corresponding sets of integer points of each polytope. 
\end{abstract}

\maketitle

\section{Introduction}
\label{sec:intro}

The understanding of structural constants is one of the most important problems in representation theory. It is a difficult problem and even in simplest cases we can find unsolved problems. 

Recall that any (finite--dimensional, complex, analytic) linear representation $V$ of $GL_n (\mathbb{C})$ decomposes as a direct sum of irreducible representations:
\begin{eqnarray*}
V \approx \bigoplus m_\lambda S_\lambda(\mathbb{C}^n)
\end{eqnarray*}
where the $m_\lambda$ are non--negative integers called \emph{multiplicities} and the $S_\lambda (\mathbb{C}^n)$ are the \emph{irreducible representations of $Gl_n(\mathbb{C})$}, which are indexed by partitions of length at most $n$. 

Important families of structural constants appear in non--trivial constructions of new representations from old ones. Three of them are particularly important in a more combinatorial context. First, consider the tensor product of two irreducible representations $S_\mu(\mathbb{C}^n) \otimes S_\nu(\mathbb{C}^n)$ and decompose it into irreducible. Then, the multiplicities arising from this are the \emph{Littlewood--Richardson coefficients}, $c_{\mu \nu}^\lambda$. Combinatorial interpretations behind this family are well understood: for example, they count Littlewood-Richardson Young tableaux or integral points in the hive polytopes. Moreover, these interpretations are efficient tools for proofs and computations. 

Next, consider an irreducible representation of $GL_{mn}(\mathbb{C})$ as a representation of $GL_m(\mathbb{C})\times GL_n(\mathbb{C})$ by means of the Kronecker product of matrices, and decompose the resulting representation into irreducible. The multiplicities arising from this operation are the \emph{Kronecker coefficients}, $g_{\lambda \mu}^\nu$. In this case, combinatorial interpretations for the coefficients are known only in very particular cases. For example, C. Ballantine and R. Orellana \cite{MR2264933} have an interpretation in terms of Kronecker tableaux of the Kronecker coefficients when one of the partitions has just two parts, and M. Rosas in \cite{MR1798227} gives a description for the Kronecker product of Schur functions indexed by hook shapes and two-row shapes.

Finally, the plethysm coefficients $a_{\lambda, \mu}^\nu$ are the multiplicities obtained when we apply a Schur functor $S_\lambda$ to an irreducible representation $S_\mu(\mathbb{C}^n)$ and we decompose the resulting representation into irreducible. 

One of the major problems in combinatorial representation theory is to find interpretations for the Kronecker coefficients and the plethysm coefficients akin to those known for the Littlewood--Richardson coefficients. In this paper, we consider a family of plethysm coefficients and we give a combinatorial interpretation of them. 

Another important problem in representation theory is to understand the stability properties that we can observe for the different kind of coefficients.

For the Kronecker case, Murnaghan \cite{MR1507347} and Littlewood \cite{MR0074421} observed that some sequences stabilize (they are eventually constant). We can find results about these stability phenomena in papers of E. Vallejo \cite{MR1725703}, L. Manivel \cite{Manivel14}, I. Pak and G. Panova \cite{PakPanova14}, or E. Briand, R. Orellana and M. Rosas \cite{MR2673866}, for example.

The plethysm coefficients also show stability properties. In 1950, Foulkes, in \cite{MR0037276}, was the first one who observed some of those stability properties and in 1990's we can find the first proofs of them. 

We are interested in those phenomena. In this paper, we consider several sequences of plethysm coefficients. Thanks to our combinatorial interpretation of them, we give new combinatorial proofs of their stability properties. 

Plethysm coefficients can be computed using the language of symmetric functions: they corresponds to the coefficients
appearing in the expansion on the Schur basis $\{s_\nu\}$ of $s_{\lambda} [s_\mu]$, which is the operation, also called \emph{plethysm}, induced by the plethysm of $S_\lambda$ and $S_\mu (\mathbb{C}^n)$ in representation theory. 
So our results will be set on the symmetric function framework.
\begin{thm}\label{THM1.1}
For any partitions $\lambda$, $\mu$ and $\nu$, such that $|\lambda| \cdot |\mu| = |\nu|$, the sequences of general terms
\begin{eqnarray*}
 a_{\lambda , \mu+ (n)}^{\nu + (n \cdot |\lambda |)},
 a_{\lambda, \mu + n\pi}^{\nu + n \cdot |\lambda|\cdot \pi} ,
 a_{\lambda + (n), \mu}^{\nu + n \cdot \mu} , 
 a_{\lambda + (n), \mu}^{\nu + (n \cdot|\mu |)} 
\end{eqnarray*}
stabilize.
\end{thm}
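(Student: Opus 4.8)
The plan is to reduce the Schur-basis plethysm coefficients $\plet{\lambda}{\mu}{\nu} = \scalar{s_\lambda[s_\mu]}{s_\nu}$ to the auxiliary coefficients $\pletH{\alpha}{\beta}{\gamma} = \scalar{h_\alpha[h_\beta]}{h_\gamma}$, which record the coefficient of $m_\gamma$ in the plethysm $h_\alpha[h_\beta]$ of two complete homogeneous products, and then to prove stability directly on the $b$'s by a lattice-point bijection. The first step is purely formal. Applying the Jacobi--Trudi identity to the outer factor and using that $f \mapsto f[s_\mu]$ is a ring homomorphism in its first argument, I get $s_\lambda[s_\mu] = \sum_\tau \varepsilon_\tau\, h_\tau[s_\mu]$, a finite signed integer combination indexed by the compositions $\tau$ coming from the determinant. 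Expanding the inner $s_\mu$ by Jacobi--Trudi as well and pushing it through the plethysm by means of the addition formula $h_k[A+B] = \sum_{i} h_i[A]\,h_{k-i}[B]$, together with $h_k[-X] = (-1)^k e_k[X]$ for the negative parts and the conversion of the resulting elementary factors back into complete homogeneous ones, expresses $s_\lambda[s_\mu]$ --- and hence, after pairing against the Jacobi--Trudi expansion of $s_\nu$ --- as a finite $\mathbb{Z}$-linear combination $\plet{\lambda}{\mu}{\nu} = \sum \varepsilon\, \pletH{\alpha}{\beta}{\gamma}$ whose support and signs depend only on $\lambda$, $\mu$, $\nu$ and, under each shift in the statement, propagate to the indices $(\alpha,\beta,\gamma)$ by a uniform translation for $n$ large.

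Second, I interpret the $b$'s combinatorially. Writing $h_\beta$ as a sum of monomials and substituting into $h_\alpha$, the coefficient $\pletH{\alpha}{\beta}{\gamma}$ counts families of nonnegative integer matrices whose row sums are dictated by $\beta$, whose total column content is $\gamma$, and which are grouped and symmetrised according to $\alpha$. Selecting the lexicographically least representative in each symmetry orbit realises these as the integer points of an explicit polytope $\PP{\gamma}{(\alpha,\beta)}$, cut out by the row-sum and column-sum equations together with the sorting inequalities; thus $\pletH{\alpha}{\beta}{\gamma}$ is the number of lattice points of $\PP{\gamma}{(\alpha,\beta)}$.

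Third --- the combinatorial heart --- I analyse how each shift deforms these polytopes. The first sequence is the case $\pi = (1)$ of the second, so there are really two families. For the inner family $\plet{\lambda}{\mu + n\pi}{\nu + n\cdot|\lambda|\cdot\pi}$, increasing $\mu$ by $n\pi$ raises the prescribed row sums while increasing $\nu$ by $n\cdot|\lambda|\cdot\pi$ raises the prescribed column content by exactly the matching amount; for $n$ above a threshold $N$ determined by the finitely many $(\alpha,\beta,\gamma)$ in play, the polytope $\PP{\gamma}{(\alpha,\beta)}$ at parameter $n+1$ is a lattice translate of the one at parameter $n$ (add one copy of a fixed $\pi$-patterned block), which gives the bijection on integer points. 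For the outer families $\plet{\lambda+(n)}{\mu}{\nu+n\cdot\mu}$ and $\plet{\lambda+(n)}{\mu}{\nu+(n\cdot|\mu|)}$, the shift $\lambda \to \lambda+(n)$ lengthens the matrices by $n$ rows while $\nu$ grows by $n\mu$, respectively by $n|\mu|$ in the first coordinate; the bijection appends $n$ identical rows of content $\mu$, respectively $n$ rows carrying all their mass in the first column, and is again a translation once $n \ge N$. In every case the count $\pletH{\alpha}{\beta}{\gamma}$ is eventually constant.

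Finally I assemble the pieces. Since $\plet{\lambda}{\mu}{\nu}$ is governed by the same finite signed pattern of $b$'s for all $n \ge N$ (up to the uniform index shift), and each participating $b$ stabilises by the translation bijections above with a common threshold, the sequence $\plet{\lambda}{\mu}{\nu}$ is eventually constant. The main obstacle is the coherence of the assembly rather than any single bijection: I must guarantee that as $n$ grows no new term with non-stabilising behaviour enters the alternating sum --- equivalently, that the Jacobi--Trudi supports and the lattice-point bijections are compatible, so that eventual constancy survives the cancellation of signs. Controlling this uniform threshold $N$, together with the sign bookkeeping in the inner reduction where the elementary symmetric functions appear, is where the real work lies.
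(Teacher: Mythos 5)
Your overall scheme is the paper's: expand by Jacobi--Trudi to reduce the Schur-plethysm coefficients to a fixed finite signed sum of $h$-type plethysm coefficients, interpret those as lattice-point counts, and stabilize each summand by a bijection. Your worry at the end about ``new terms entering the alternating sum'' is unfounded, since the index set of the Jacobi--Trudi expansion is a fixed product of symmetric groups independent of $n$. But there are two genuine problems. First, you also expand the \emph{inner} Schur function by Jacobi--Trudi and push it through the plethysm via $h_k[A+B]$ and $h_k[-X]=(-1)^k e_k[X]$, aiming at coefficients $\left\langle h_\alpha[h_\beta],h_\gamma\right\rangle$. Plethysm is not linear in its second argument, so this forces you through virtual alphabets, products of $e$'s and $h$'s, and a further conversion you only gesture at; none of it is carried out, and none of it is needed. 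The paper keeps the inner argument as $s_\mu$, defines $b_{\lambda,\mu}^{\nu}=\left\langle h_\lambda[s_\mu],h_\nu\right\rangle$, and gets the lattice-point model directly from the monomial expansion of $s_\mu$ over semistandard tableaux.

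Second, and more seriously, the entire difficulty of the theorem is the eventual \emph{surjectivity} of the maps between lattice-point sets, and you replace it with the assertion that for large $n$ the polytope at $n+1$ is a ``lattice translate'' of the one at $n$. That cannot be right as stated: in the inner cases the set of tableaux of shape $\mu+n\pi$ (or, in your model, of matrices with row sums $\beta(n)$) grows with $n$, so the ambient dimension changes and no translation exists; the content of the proof is precisely that the coordinates indexed by the new tableaux are forced to vanish once $n$ is large, which in the paper requires the estimates $\rho_1(T')\le\mu_2$ for new tableaux and, for general $\pi$, an argument through Gelfand--Tsetlin patterns. In the outer cases the injection increments a single distinguished entry $m_{1,T_1}$ (note also that $\lambda+(n)$ means adding $n$ to the first part of $\lambda$, not appending $n$ rows, so the row set of the matrices is fixed), and surjectivity amounts to showing $m_{1,T_1}>0$ for every point of $E(n+1)$ once $n$ is large; this is where the cumulative-sum/dominance-order estimate $\|\rho(T)\|\le\|\mu\|-1$ for $T\ne T_1$ does the work. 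Without some version of these estimates your argument establishes only that the sequences are eventually nondecreasing, not that they stabilize.
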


\begin{observation}
If $|\lambda | \cdot |\mu| \neq |\nu|$, the coefficient $a_{\lambda \mu}^\nu$ is zero. So from now, we just consider partitions that satisfy this condition. 
\end{observation}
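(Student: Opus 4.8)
The plan is to deduce the vanishing directly from the fact that plethysm is compatible with the natural grading on the ring of symmetric functions. Recall that the ring $\Lambda$ of symmetric functions is graded, that each Schur function $s_\nu$ is homogeneous of degree $|\nu|$, and that $\{s_\nu\}$ is a homogeneous basis. The coefficient $a_{\lambda,\mu}^{\nu}$ is by definition the coefficient of $s_\nu$ in the Schur expansion of $s_{\lambda}[s_{\mu}]$. Hence the whole statement reduces to the single assertion that $s_{\lambda}[s_{\mu}]$ is homogeneous, and to the identification of its degree.

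The key step is the degree-multiplicativity of plethysm: if $f$ is homogeneous of degree $d$ and $g$ is homogeneous of degree $e$, then $f[g]$ is homogeneous of degree $d\cdot e$. I would justify this on the power-sum generators. The plethysm $p_n[g]$ is obtained by replacing every variable $x_i$ of $g$ by $x_i^n$, so if $g$ is homogeneous of degree $e$ then $p_n[g]$ is homogeneous of degree $n\cdot e$. Writing $s_\lambda = \sum_{\rho \vdash |\lambda|} z_\rho^{-1}\chi^\lambda_\rho\, p_\rho$ and using that $f\mapsto f[g]$ is a ring homomorphism, every term of $s_{\lambda}[g]$ is a product $\prod_i p_{\rho_i}[g]$ of total degree $\bigl(\sum_i \rho_i\bigr)e = |\lambda|\cdot e$. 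As all $\rho$ in the sum satisfy $|\rho|=|\lambda|$, these degrees agree, so $s_{\lambda}[g]$ is homogeneous of degree $|\lambda|\cdot e$. Taking $g=s_\mu$ with $e=|\mu|$ gives that $s_{\lambda}[s_{\mu}]$ is homogeneous of degree $|\lambda|\cdot|\mu|$.

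Since $\{s_\nu\}$ is a homogeneous basis of $\Lambda$, a homogeneous element of degree $|\lambda|\cdot|\mu|$ expands only into Schur functions $s_\nu$ with $|\nu|=|\lambda|\cdot|\mu|$. Therefore $a_{\lambda,\mu}^{\nu}=0$ whenever $|\nu|\neq|\lambda|\cdot|\mu|$, which is exactly the claim. There is no real obstacle here: the only ingredient needing care is the degree-multiplicativity of plethysm, and this is a standard fact (e.g. in Macdonald's book) that may be cited rather than reproved; everything else is immediate from the definition of $a_{\lambda,\mu}^{\nu}$ as a Schur-expansion coefficient together with the homogeneity of the Schur basis.
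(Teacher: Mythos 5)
Your proof is correct: degree-multiplicativity of plethysm, checked on the power sums $p_n[g]$ and extended via the ring-homomorphism property of $f\mapsto f[g]$, shows that $s_{\lambda}[s_{\mu}]$ is homogeneous of degree $|\lambda|\cdot|\mu|$, so only Schur functions $s_\nu$ with $|\nu|=|\lambda|\cdot|\mu|$ can appear in its expansion. The paper states this observation without proof, treating it as a standard fact, and your grading argument is exactly the standard justification it implicitly invokes; it is fully compatible with the paper's own setup, which already records that $p_n[g]=g[p_n]$ and that $f[g]$ is the evaluation of $f$ on the monomials of $g$ (each of degree $|\mu|$ when $g=s_\mu$), from which the same degree count follows at once.
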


These sequences had been studied before by other authors. The following list summarizes what has been done before. 
\begin{enumerate}

 \item[$(P1)$] In \cite[Theorem 4.2]{CaTh92}, it is proved that the sequence with general term
 \begin{eqnarray*}
  a_{\lambda +(n), \mu}^{\nu + (|\mu| \cdot n)} = \left\langle s_{\lambda +(n)}[s_{\mu}], s_{\nu + (|\mu|\cdot n)} \right\rangle
 \end{eqnarray*}
 stabilizes, but with limit zero when $\ell(\mu) > 1$. 
 
 \item[$(Q1)$] In \cite[Corollary 1, Section 2.6]{Br93}, it is considered the sequence of general term
  \begin{eqnarray*}
  a_{\lambda +(n), \mu}^{\nu + n \cdot \mu} = \left\langle s_{\lambda + (n)}[s_{\mu }], s_{\nu + n \cdot \mu} \right\rangle
 \end{eqnarray*}
as a function of $n\geq 0$ and it is proved that it is an increasing function. It is also proved that it is constant for $n$ sufficiently large. 

 \item[$(R1)$] In \cite[Theorem 4.1]{CaTh92}, it is proved that the sequence with general term
 \begin{eqnarray*}
  a_{\lambda, \mu + (n)}^{\nu + (|\lambda| \cdot n)} = \left\langle s_{\lambda}[s_{\mu + (n)}], s_{\nu + (|\lambda|\cdot n)} \right\rangle
 \end{eqnarray*}
 stabilizes for $n$ big enough. 
 
 \item[$(R2)$]In \cite[Corollary 1, Section 2.6]{Br93}, it is considered the sequence of general term
  \begin{eqnarray*}
  a_{\lambda, \mu + n\cdot \pi}^{\nu + n \cdot |\lambda| \cdot \pi} = \left\langle s_{\lambda}[s_{\mu + n\cdot \pi}], s_{\nu + n \cdot |\lambda| \cdot \pi} \right\rangle
 \end{eqnarray*}
as a function of $n\geq 0$ and it is proved that it is an increasing function. It is also proved that it is constant for $n$ sufficiently large. 

\end{enumerate}

\begin{observation}
Observe that $(R1)$ is a particular case of $(R2)$, just considering $\pi=(1)$. 
\end{observation}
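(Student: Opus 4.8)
The plan is simply to verify that the two sequences coincide term by term once the partition $\pi$ in $(R2)$ is specialized to the single-box partition $(1)$, so that no separate argument is needed. First I would fix the conventions on partition arithmetic underlying the two formulas: for a partition $\pi=(\pi_1,\pi_2,\dots)$ the product $n\cdot\pi$ denotes the dilated partition $(n\pi_1,n\pi_2,\dots)$ obtained by scaling each part by $n$, whereas $\mu+(n)$ denotes the componentwise sum with the one-part partition $(n)$, namely $(\mu_1+n,\mu_2,\mu_3,\dots)$.

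Setting $\pi=(1)$, I would evaluate the two indices appearing in $(R2)$. The inner partition becomes $\mu+n\cdot(1)=\mu+(n)$, since $n\cdot(1)=(n)$, and the outer partition becomes $\nu+n\cdot|\lambda|\cdot(1)=\nu+(n\cdot|\lambda|)=\nu+(|\lambda|\cdot n)$. Substituting these into the general term $\langle s_{\lambda}[s_{\mu+n\cdot\pi}],\,s_{\nu+n\cdot|\lambda|\cdot\pi}\rangle$ of $(R2)$ produces exactly $\langle s_{\lambda}[s_{\mu+(n)}],\,s_{\nu+(|\lambda|\cdot n)}\rangle$, which is the general term of $(R1)$. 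I would also note in passing that the standing degree condition $|\lambda|\cdot|\mu|=|\nu|$ is shared by both sequences, so the specialization does not alter the range of nonzero coefficients.

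Since the two sequences agree for every $n$, they are literally the same sequence; hence every conclusion recorded for $(R2)$ above—in particular that it is increasing as a function of $n$ and eventually constant—transfers verbatim to $(R1)$. The only point demanding attention is keeping the two notations apart, namely the multiplicative dilation $n\cdot\pi$ versus the additive shift by the one-part partition $(n)$, so that the substitution $\pi=(1)$ is read correctly; once that is settled the identification is immediate and there is no genuine obstacle.
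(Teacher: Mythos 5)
Your proposal is correct and matches the paper's (implicit) justification: the observation is proved simply by substituting $\pi=(1)$ into the general term of $(R2)$, noting that $n\cdot(1)=(n)$ and $n\cdot|\lambda|\cdot(1)=(|\lambda|\cdot n)$, which yields exactly the general term of $(R1)$. Your care in distinguishing the dilation $n\cdot\pi$ from the additive shift $\mu+(n)$ is exactly the only point that needed checking.
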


Before proceeding, we specify some notation we will use. We denote by $\lambda(n)$, $\mu(n)$ and $\nu(n)$ the partitions depending on $n$ that we are considering, including the case $(R2)$, in which $\mu(n)$ and $\nu(n)$ also depend on the fixed partition $\pi$.  
 The operations with partitions can be seen as operations with vectors, where we are adding zero parts in order to consider vectors of the same length. 

The stability of the sequences of plethysm coefficients $a_{\lambda(n), \mu(n)}^{\nu(n)}$ that we consider will follow from the stability of other sequences of plethysm coefficients, $b_{\lambda(n)+\omega, \mu(n)}^{\nu(n)+\omega'}$.
 These other plethysm coefficients, that we call \emph{h-plethysm coefficients}, are the coefficients of the monomial function $m_\nu$ in the plethysm $h_{\lambda}[s_\mu]$, where the $h_\lambda$ are the complete homogeneous functions. 
 We show that the coefficients $b_{\lambda, \mu}^{\nu}$ have a combinatorial interpretation in general: they count integer points in a polytope, $Q(\lambda,\mu,\nu)$. Then, in order to prove the stability for them, we build an injection from the polytope corresponding to $b_{\lambda(n), \mu(n)}^{\nu(n) }$ to
 the polytope corresponding to $b_{\lambda(n+1), \mu(n+1)}^{\nu(n+1) }$. Finally, we show that these injections are surjective for $n$ big enough. 

The idea of testing the stability of some coefficients through other coefficients can be also found in other references, as in the paper of Stembridge \cite{Stembridge}, 
where a similar idea is used for proving stability in case of \emph{Kostka coefficients} and \emph{Kronecker coefficients}. Also in the case of other plethysm coefficients, like in \cite{KhaleMichalek}, where Khale and Micha{\l}ek study stability properties for the 
plethysm coefficients $\left\langle p_\alpha[s_{(k)}], h_\beta\right\rangle$ or in the thesis of R. Abebe, \cite{Abe13}, where it is studied the plethysm coefficient $\left\langle s_\lambda[s_\mu], h_\beta\right\rangle$. 
Other techniques to prove the stability in case of plethysm coefficients can be found in papers of Manivel and Michalek \cite{MR3181730} and Brion \cite{Br93}, where they use vector bundles, or in papers of Carr\'e and Thibon \cite{CaTh92}, with vertex operators techniques. 

\section{Plethysm coefficients: a combinatorial interpretation}
\label{sec:prew}

Any (finite-dimensional, complex, analytic) linear representation of $GL_n(\mathbb{C})$ can be completely described, up to isomorphism, by its character, which is a symmetric polynomial (see \cite{MR1464693, MR1153249}). So, we can set the computations of plethysm in the framework of symmetric functions. 

Recall that the ring of symmetric functions is a graded ring endowed with a scalar product (see \cite{MR1464693, MR1354144, MR1676282}). It admits several important linear basis as the Schur functions, $s_\lambda$; the monomial functions, $m_\lambda$, or the product of complete homogeneous functions, $h_\lambda = h_{\lambda_1}h_{\lambda_2} \dots h_{\lambda_k}$. Observed that all the bases are indexed by partitions. With respect to the scalar product, the Schur functions are an orthonormal basis, and the monomial functions and the complete homogeneous functions are dual bases. 

The operation of plethysm in representation theory induces an operation $(f,g) \longrightarrow f[g]$ on the ring of symmetric functions, also called \emph{plethysm} (see \cite{MR1354144}). 
The plethysm $f[g]$ can be also seen as the evaluation of $f$ in the alphabet defined by $g[X]$ once we have written it as a sum of monomials. 
The reader can check this property for the power sum basis, for which $p_n[g]=g[p_n]$. 
This equivalent definition will be more useful for us. 
Among the properties of plethysm, we remember that this operation is associative and linear in the first argument, but it is non--commutative and not bilinear. 

In this framework, the plethysm coefficient $a_{\lambda, \mu}^\nu $ is the coefficient of $s_{\nu}$ in the expansion in the Schur basis of the plethysm of Schur functions $s_{\lambda}[s_{\mu}]$. 
Alternatively, due to the orthonormality of the Schur basis, this coefficient is extracted by means of a scalar product:
\begin{eqnarray}\label{scalarplet}
a_{\lambda \mu}^\nu =\left\langle s_{\lambda}[s_{\mu}], s_{\nu} \right\rangle.
\end{eqnarray}

The \emph{Jacobi-Trudi identity} gives us an expansion of $s_\lambda$ in terms of the complete homogeneous basis $\{h_\gamma\}$. We recall it here
\begin{lem}[Jacobi--Trudi identity, \cite{MR1354144} I. (3.4)]

 Let $\lambda$ be a partition with length at most $N$. Then
 \begin{eqnarray*}
  s_\lambda = \det \left(h_{\lambda_j + i- j} \right)_{1 \leq i, j \leq N}
 \end{eqnarray*}
 with $h_0=1$ and $h_r=0$ if $r<0$, and $\lambda$ is completed with trailing zeros if necessary.
\end{lem}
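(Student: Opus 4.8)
The plan is to give a bijective proof via the Lindstr\"om--Gessel--Viennot lemma on non-intersecting lattice paths, which matches the combinatorial spirit of this paper; I will prove the identity for symmetric polynomials in $n$ variables $x_1,\dots,x_n$ for every $n$, whence it follows for symmetric functions. Two generating-function facts make both sides countable. On one side, $s_\lambda=\sum_T x^T$, summed over all semistandard Young tableaux $T$ of shape $\lambda$ with entries in $\{1,\dots,n\}$, where $x^T$ records how often each value occurs. On the other side, $h_d=\sum_{1\le i_1\le\cdots\le i_d\le n} x_{i_1}\cdots x_{i_d}$ is the weight-generating function of weakly increasing sequences of length $d$, with $h_0=1$ (the empty sequence) and $h_r=0$ for $r<0$ (no sequences) built in. I would encode such sequences as monotone lattice paths in $\mathbb{Z}^2$ using unit North and East steps, weighting an East step taken at height $r$ by $x_r$, so that the generating function of all paths from a fixed source to a fixed sink having $d$ East steps equals $h_d$.

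Next I would choose sources and sinks so as to manufacture exactly the matrix $\left(h_{\lambda_j+i-j}\right)_{1\le i,j\le N}$. Placing the source of the $i$-th path at $A_i=(1-i,0)$ and the sink of the $j$-th path at $B_j=(\lambda_j-j+1,n)$, any monotone path from $A_i$ to $B_j$ makes $(\lambda_j-j+1)-(1-i)=\lambda_j+i-j$ East steps, so the generating function of all such paths is precisely $h_{\lambda_j+i-j}$, the $(i,j)$-entry of the Jacobi--Trudi matrix. Both conventions are then automatic: a diagonal entry coming from a trailing zero of $\lambda$ has displacement $0$ and contributes the empty path with weight $h_0=1$, while an entry with $\lambda_j+i-j<0$ places $B_j$ strictly to the left of $A_i$, admitting no rightward path, so its generating function is $0=h_{\lambda_j+i-j}$.

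By definition the determinant $\det\left(h_{\lambda_j+i-j}\right)$ is the signed sum, over all permutations $\sigma\in\mathfrak{S}_N$, of $\operatorname{sgn}(\sigma)$ times the combined weight of $N$-tuples of paths in which the $i$-th path runs from $A_i$ to $B_{\sigma(i)}$. Applying the Lindstr\"om--Gessel--Viennot lemma, the contributions of all intersecting families cancel in pairs, leaving the signed weight-sum over non-intersecting families. The geometry is the decisive point: since the $x$-coordinates of the $A_i$ are strictly decreasing in $i$ and those of the $B_j$ are strictly decreasing in $j$ (because $\lambda_j-j$ strictly decreases), any non-intersecting family must realize the identity permutation, so every surviving term has sign $+1$. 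It then remains to biject non-intersecting $N$-tuples with semistandard tableaux of shape $\lambda$: reading the East-step heights of the $i$-th path as the entries of the $i$-th row yields a weakly increasing row, and the non-crossing condition on consecutive paths encodes precisely the strict increase down columns. Under this correspondence the path weight equals $x^T$, giving $\det\left(h_{\lambda_j+i-j}\right)=\sum_T x^T=s_\lambda$.

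The step I expect to be the main obstacle is this final translation of \emph{non-intersecting} into \emph{column-strict}: one must fix the coordinate offsets so that two adjacent paths avoid each other exactly when the corresponding tableau entries satisfy the strict column inequality, and check that the trailing zeros padding $\lambda$ to length $N$ contribute only empty rows and leave the count unchanged. A purely algebraic alternative would bypass the bijection: starting from the bialternant definition $s_\lambda=a_{\lambda+\delta}/a_\delta$ and the relation $\sum_{r}(-1)^r e_r h_{n-r}=0$ for $n\ge 1$, which makes the unitriangular matrices $(h_{i-j})$ and $\left((-1)^{i-j}e_{i-j}\right)$ mutually inverse, one extracts the identity by a minor-complementation argument; this route, however, trades the combinatorial transparency sought here for determinant bookkeeping.
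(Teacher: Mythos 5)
Your proposal is essentially correct, but note first that the paper does not prove this lemma at all: it is quoted verbatim from Macdonald (I.(3.4)), where the argument is the algebraic one you mention in your last sentence --- the bialternant definition of $s_\lambda$ together with the mutually inverse unitriangular matrices built from $\sum_r (-1)^r e_r h_{n-r}=0$. What you supply instead is the standard Lindstr\"om--Gessel--Viennot proof, and it is sound: the sources $A_i=(1-i,0)$ and sinks $B_j=(\lambda_j-j+1,n)$ do produce exactly $\lambda_j+i-j$ East steps, the conventions $h_0=1$ and $h_r=0$ for $r<0$ fall out of the geometry as you say, the strictly decreasing $x$-coordinates of sources and sinks force the identity permutation on non-intersecting families, and the translation of non-intersection into column-strictness is the classical bijection with semistandard tableaux. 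The trade-off is the expected one: your route buys a positive combinatorial formula ($s_\lambda$ as a weight sum over tableaux, with all cancellation handled by LGV) at the cost of the coordinate bookkeeping you flag yourself, whereas Macdonald's route is a few lines of linear algebra with no combinatorics. One small point you should actually nail down rather than defer: with sinks at height $n$ and sources at height $0$, East steps can occur at height $0$, so either the weight of an East step must be attached to the height \emph{after} the subsequent North steps, or the sources must be placed at height $1$; otherwise the path generating function is $h_d$ in $n+1$ variables $x_0,\dots,x_n$ rather than $n$. This is a known, fixable offset, not a flaw in the approach, but as written the weighting is off by one.
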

If we expand explicitly the determinant in this expression, we describe the Schur function as a sum over the permutations  $\sigma$ in the symmetric group $\mathfrak{S}_N$ (\cite{MR1354144} I. (3.4'))
\begin{eqnarray*}
 s_\lambda = \sum_{\sigma \in \mathfrak{S}_{N}} \varepsilon(\sigma) h_{\lambda + \omega(\sigma)}
\end{eqnarray*}
where $\omega(\sigma)_j  = \sigma(j)-j $, for all $j$ between $1$ and $N$, and $\varepsilon( \sigma)$ is the sign of the permutation $\sigma$.

We now perform this Jacobi--Trudi expansion for $s_{\lambda}$ and $s_{\nu}$ in \eqref{scalarplet}. We get the following alternating decomposition for the plethysm coefficients.
\begin{lem}\label{a in b}
Let $N$ and $N'$ be positive integers. Let $\lambda$, $\mu$ and $\nu$ be partitions, such that $\lambda$ has length at most $N$ and $\nu$ has length at most $N'$. Then 
\begin{eqnarray*}
a_{\lambda \mu}^\nu =
\sum_{\sigma,\tau} \varepsilon(\sigma) \varepsilon(\tau) \left\langle h_{\lambda+\omega(\sigma)}[s_{\mu}], h_{\nu+\omega(\tau)} \right\rangle
\end{eqnarray*}
where the sum is carried over all permutations $\sigma \in \mathfrak{S}_{N}$ and $\tau \in \mathfrak{S}_{N'}$.
\end{lem}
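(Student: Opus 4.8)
The plan is to start from the scalar-product expression \eqref{scalarplet} for $a_{\lambda\mu}^\nu$ and substitute the permutation form of the Jacobi--Trudi identity into both the Schur function $s_\lambda$ sitting inside the plethysm and the Schur function $s_\nu$ appearing as the second argument of the scalar product. Concretely, since $\lambda$ has length at most $N$, the identity gives $s_\lambda = \sum_{\sigma \in \mathfrak{S}_{N}} \varepsilon(\sigma)\, h_{\lambda+\omega(\sigma)}$, and since $\nu$ has length at most $N'$, it gives $s_\nu = \sum_{\tau \in \mathfrak{S}_{N'}} \varepsilon(\tau)\, h_{\nu+\omega(\tau)}$. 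Here I read each $h_{\lambda+\omega(\sigma)}$ as the product $\prod_j h_{(\lambda+\omega(\sigma))_j}$, with the conventions $h_0 = 1$ and $h_r = 0$ for $r < 0$, so that the symbol is well defined even when the index vector $\lambda + \omega(\sigma)$ is not a partition (in which case the corresponding term simply vanishes).

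First I would push the expansion of $s_\lambda$ through the plethysm. The operation $f \mapsto f[s_\mu]$ is linear in its first argument, as recalled above, so
\begin{eqnarray*}
s_\lambda[s_\mu] = \sum_{\sigma \in \mathfrak{S}_{N}} \varepsilon(\sigma)\, h_{\lambda+\omega(\sigma)}[s_\mu].
\end{eqnarray*}
Next I would expand $s_\nu$ in the second slot of the scalar product and use that the scalar product on the ring of symmetric functions is bilinear to distribute both finite sums to the outside. Combining the two steps gives
\begin{eqnarray*}
a_{\lambda\mu}^\nu = \left\langle s_\lambda[s_\mu], s_\nu\right\rangle = \sum_{\sigma,\tau} \varepsilon(\sigma)\varepsilon(\tau) \left\langle h_{\lambda+\omega(\sigma)}[s_\mu], h_{\nu+\omega(\tau)}\right\rangle,
\end{eqnarray*}
the sum being carried over $\sigma \in \mathfrak{S}_{N}$ and $\tau \in \mathfrak{S}_{N'}$, which is exactly the claimed formula.

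Since every manipulation is an instance of linearity (of plethysm in its first argument, and of the scalar product in each argument), there is no genuine analytic or combinatorial obstacle: the proof is purely formal. The only point that deserves care is the interpretation of the symbols $h_{\lambda+\omega(\sigma)}$ and $h_{\nu+\omega(\tau)}$ when the index vectors are not honest partitions, together with the resulting \emph{h-plethysm coefficients} $\left\langle h_{\lambda+\omega(\sigma)}[s_\mu], h_{\nu+\omega(\tau)}\right\rangle$; this is where the conventions $h_0=1$ and $h_r=0$ for $r<0$ must be tracked consistently, and it is precisely these coefficients that the subsequent combinatorial analysis (through the polytope $Q(\lambda,\mu,\nu)$) will control.
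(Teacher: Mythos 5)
Your proof is correct and follows exactly the paper's route: substitute the permutation form of the Jacobi--Trudi identity for both $s_\lambda$ and $s_\nu$ in the scalar-product formula $a_{\lambda\mu}^\nu=\left\langle s_\lambda[s_\mu],s_\nu\right\rangle$, then use linearity of plethysm in its first argument and bilinearity of the scalar product. Your extra care about the conventions $h_0=1$ and $h_r=0$ for $r<0$ is a welcome clarification of a point the paper leaves implicit.
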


We have expressed the plethysm coefficient $a_{\lambda, \mu}^\nu$ in terms of the \emph{$h$-plethysm coefficients}, which are interesting for their own sake. For any partition $\mu$ and any finite sequences $\lambda$ and $\nu$ of integers  we set:
\begin{eqnarray*}
b_{\lambda \mu}^\nu = \left\langle h_{\lambda}[s_{\mu}], h_{\nu}\right\rangle.
\end{eqnarray*}

It turns out that these coefficients count the non-negative solutions of systems of linear Diophantine equations whose constant terms depend linearly on the parts of $\lambda$ and $\nu$. In particular, they count integer points in polytopes with an interesting description.

Before specifying a description of such polytopes, we should introduce some notation. For any partition $\mu$ and any positive integer $N$, let $t(\mu, N)$ be the set of semi--standard Young tableaux (SSYT) of shape $\mu$ with entries between $1$ and $N$,  (see \cite{MR1676282} 7.10). Let $\mathcal{P}_{\mu, N}=(\rho_j(T) )_{T,j}$ be the matrix whose rows are indexed by the tableaux $T \in t(\mu, N)$, whose columns are indexed by the integers $j$ between $1$ and $N$, such that $\rho_j(T)$ is the number of occurrences of $j$ in $T$, i.e., the row $T$ of $\mathcal{P}_{\mu, N}$, $\rho(T)$, is the \emph{weight} of the tableau $T$.

\begin{prop}\label{coeffs pleth}
Let $\lambda$ and $\nu$ be finite sequences of positive integers and let $\mu$ be a partition. Let $\ell(\mu)$ be the length of $\mu$ and let $N\geq \lambda, \nu$.

The coefficient $b_{\lambda, \mu}^\nu$ is the cardinal of the set $Q(\lambda, \mu, \nu, N)$ of matrices $\mathcal{M}=(m_{i,T})$ with non-negative integer entries whose rows are indexed by the integers $i$ between $1$ and $N$, and whose columns are indexed by the tableaux $T \in t(\mu, N)$ such that:
\begin{itemize}
\item \textsc{row sum condition of }$\mathcal{M}$: The sum of the entries in the $i-$th row of $\mathcal{M}$ is $\lambda_i$.
\item \textsc{column sum condition of} $\mathcal{MP}_{\mu, N}$: The sum of the entries in the $j-$th column of $\mathcal{MP}_{\mu, N}$ is $\nu_j$. 
\end{itemize} 
\end{prop}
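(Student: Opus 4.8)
The plan is to evaluate the scalar product $b_{\lambda\mu}^\nu = \langle h_\lambda[s_\mu], h_\nu\rangle$ by turning it into a coefficient extraction and then reading that coefficient combinatorially off an explicit monomial expansion of the plethysm. Since the complete homogeneous functions $\{h_\gamma\}$ and the monomial functions $\{m_\gamma\}$ are dual bases, pairing against $h_\nu$ simply picks out the coefficient of $m_\nu$; that is, if $h_\lambda[s_\mu] = \sum_\gamma c_\gamma m_\gamma$ then $b_{\lambda\mu}^\nu = c_\nu$. So the whole task reduces to computing the coefficient of $m_\nu$ --- equivalently, since $h_\lambda[s_\mu]$ is symmetric, the coefficient of the single monomial $x^\nu = x_1^{\nu_1}\cdots x_N^{\nu_N}$ --- in the plethysm $h_\lambda[s_\mu]$, working in the $N$ variables $x_1,\dots,x_N$.

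First I would expand $s_\mu$ in $N$ variables as a sum of monomials over semistandard tableaux, $s_\mu(x_1,\dots,x_N)=\sum_{T\in t(\mu,N)} x^{\rho(T)}$, so that the alphabet defining the plethysm consists of one letter $y_T := x^{\rho(T)}$ per tableau $T\in t(\mu,N)$. Using that $f\mapsto f[s_\mu]$ is a ring homomorphism in the first argument, I factor $h_\lambda[s_\mu]=\prod_{i=1}^{N} h_{\lambda_i}[s_\mu]$ (with $\lambda_i=0$, hence $h_0=1$, for $i>\ell(\lambda)$). The interpretation of $h_r$ as the sum of all degree-$r$ monomials in its alphabet then gives
\begin{equation*}
h_{\lambda_i}[s_\mu]=\sum_{\substack{(m_{i,T})_{T}\\ \sum_T m_{i,T}=\lambda_i}}\ \prod_{T} y_T^{\,m_{i,T}},
\end{equation*}
the sum being over all nonnegative integer tuples indexed by $t(\mu,N)$ with total $\lambda_i$. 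Multiplying these $N$ factors together collects the data into a single matrix $\mathcal{M}=(m_{i,T})$ with nonnegative integer entries, and the range of summation is exactly the set of such matrices satisfying the \textsc{row sum condition} $\sum_T m_{i,T}=\lambda_i$.

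Next I would push the resulting monomials back to the $x$-alphabet. Each term contributes $\prod_{i,T} y_T^{\,m_{i,T}}=\prod_T y_T^{\,\sum_i m_{i,T}}=\prod_j x_j^{\,d_j}$, where $d_j=\sum_{i,T} m_{i,T}\,\rho_j(T)$; but $\sum_T m_{i,T}\rho_j(T)$ is precisely the $(i,j)$ entry of the matrix product $\mathcal{M}\mathcal{P}_{\mu,N}$, so $d_j$ is the $j$-th column sum of $\mathcal{M}\mathcal{P}_{\mu,N}$. Hence $h_\lambda[s_\mu]=\sum_{\mathcal{M}} x^{d(\mathcal{M})}$, the sum over all row-admissible matrices. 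Extracting the coefficient of $x^\nu$ now keeps exactly those $\mathcal{M}$ with $d(\mathcal{M})=\nu$, i.e. those also satisfying the \textsc{column sum condition of} $\mathcal{M}\mathcal{P}_{\mu,N}$, and counting them yields $|Q(\lambda,\mu,\nu,N)|$, as claimed.

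The step needing the most care --- and where the hypothesis $N\geq\lambda,\nu$ enters --- is the reduction to the finite alphabet $t(\mu,N)$ and the passage from ``coefficient of $m_\nu$'' to ``coefficient of $x^\nu$.'' Since $\ell(\nu)\leq N$, the monomial $x^\nu$ is supported on $x_1,\dots,x_N$; any tableau with an entry exceeding $N$ carries a variable $x_j$ with $j>N$ into every monomial in which it appears with positive exponent, so such tableaux cannot contribute to $x^\nu$ and may be discarded, legitimizing the restriction to $t(\mu,N)$. Likewise $\ell(\lambda)\leq N$ guarantees that indexing the rows by $1,\dots,N$ loses nothing. The remaining points are the routine but index-heavy verifications: that plethysm is multiplicative in its first slot, and that the weight bookkeeping identifying $d_j$ with the $j$-th column sum of $\mathcal{M}\mathcal{P}_{\mu,N}$ matches the definition of $\mathcal{P}_{\mu,N}$; these are straightforward once the dictionary between tableaux, the letters $y_T$, and the rows and columns of $\mathcal{M}$ is fixed.
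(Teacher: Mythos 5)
Your proposal is correct and follows essentially the same route as the paper's own proof: pairing with $h_\nu$ to extract the coefficient of $x^\nu$, expanding $s_\mu$ over $t(\mu,N)$, factoring $h_\lambda[s_\mu]=\prod_i h_{\lambda_i}[s_\mu]$, and reading off the row and column sum conditions from the resulting matrix of exponents. Your added justification for why restricting to $N$ variables and to tableaux with entries at most $N$ loses nothing is a welcome extra precision, but does not change the argument.
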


\begin{observation}
We include an example of how Proposition \ref{coeffs pleth} works after its proof. 
\end{observation}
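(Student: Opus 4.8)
The plan is to carry out Proposition~\ref{coeffs pleth} on the smallest genuinely nontrivial instance, namely $\lambda = \mu = (2)$ with $N = 2$, choosing a value of $\nu$ for which the $h$-plethysm coefficient exceeds $1$ so that the matrix count is visibly interesting. First I would list the set $t((2),2)$ of SSYT of shape $(2)$ with entries in $\{1,2\}$: these are the three weakly increasing rows $(1,1)$, $(1,2)$ and $(2,2)$, with respective weights $(2,0)$, $(1,1)$ and $(0,2)$. Stacking these weights as rows produces the $3\times 2$ matrix $\mathcal{P}_{(2),2}$, the only ingredient coming from the tableaux.

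Next I would compute the left-hand side $h_2[s_{(2)}]$ directly, using the description of plethysm as the evaluation of $h_2$ on the alphabet of monomials of $s_{(2)}$, i.e.\ on $\{x_1^2,\,x_1x_2,\,x_2^2\}$. Expanding $h_2 = \sum_{i\le j} y_iy_j$ and substituting gives, after collecting terms, the monomial expansion
\begin{eqnarray*}
h_2[s_{(2)}] = m_{(4)} + m_{(3,1)} + 2\,m_{(2,2)},
\end{eqnarray*}
so the coefficients to be matched are $b_{(2),(2)}^{(4)} = 1$, $b_{(2),(2)}^{(3,1)} = 1$ and $b_{(2),(2)}^{(2,2)} = 2$; the doubled coefficient on $m_{(2,2)}$ is precisely what makes the example informative.

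Then I would run the matrix count of the proposition against these values. Reading $\lambda$ as the length-$2$ vector $(2,0)$, the row-sum condition forces the second row of $\mathcal{M}$ to vanish, so $\mathcal{M}$ has first row $(a,b,c)$ with $a+b+c = 2$ and second row zero; multiplying by $\mathcal{P}_{(2),2}$ converts the column-sum condition on $\mathcal{M}\mathcal{P}_{(2),2}$ into the Diophantine system $2a+b = \nu_1$, $b+2c = \nu_2$. For $\nu = (2,2)$ this has exactly the two solutions $(a,b,c) = (1,0,1)$ and $(0,2,0)$, recovering $b_{(2),(2)}^{(2,2)} = 2$, and I would display the two corresponding matrices. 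I would likewise check $\nu = (4,0)$ and $\nu = (3,1)$, each producing a unique matrix, to confirm the other two coefficients.

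The one point needing care, which I would flag as the main pitfall, is the bookkeeping of trailing zeros: both $\lambda$ and $\nu$ must be padded to length $N$ before the row- and column-sum conditions are imposed, and $N$ must be chosen large enough that every monomial of $h_\lambda[s_\mu]$ is seen yet small enough that the tableau set $t(\mu,N)$, and hence $\mathcal{M}$, remains small enough to exhibit. The choice $N = 2$ balances these, letting the entire computation --- including the passage through $\mathcal{P}_{(2),2}$ --- be shown on a single small matrix.
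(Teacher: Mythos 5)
Your example is correct, but it illustrates Proposition~\ref{coeffs pleth} along a genuinely different route than the paper's own example. The paper keeps the data symbolic: it takes $\mu=(2)$ with $N=3$, $\lambda=(\lambda_1,\lambda_2)$ and $\nu=(\nu_1,\nu_2,\nu_3)$ generic, lists the six tableaux of $t((2),3)$, writes down $\mathcal{P}_{\mu,N}$, and displays the generic $3\times 6$ matrix $\mathcal{M}$ together with the resulting system of row- and column-sum equations --- it never evaluates a single coefficient. You instead specialize everything ($\lambda=\mu=(2)$, $N=2$, concrete choices of $\nu$) and close the loop by computing $h_2[s_{(2)}]$ directly from the substitution definition of plethysm, obtaining $m_{(4)}+m_{(3,1)}+2\,m_{(2,2)}$ in two variables, and then checking that the Diophantine systems $2a+b=\nu_1$, $b+2c=\nu_2$, $a+b+c=2$ have exactly $1$, $1$ and $2$ solutions for $\nu=(4,0)$, $(3,1)$ and $(2,2)$ respectively; all of these computations are right. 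What the paper's choice buys is a picture of the general shape of the polytope $Q(\lambda,\mu,\nu,N)$ --- how the constants $\lambda_i$ and $\nu_j$ enter the equations linearly, and how the tableau set grows with $N$ --- which is the feature exploited in the stability proofs that follow. What your choice buys is an end-to-end numerical verification, including a coefficient larger than $1$, which the paper's symbolic example cannot provide. Your closing remark about padding $\lambda$ and $\nu$ with trailing zeros to length $N$ before imposing the sum conditions is consistent with the paper's stated conventions and is indeed the one bookkeeping point where a reader could slip.
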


\begin{proof}

Let $x_1$, $x_2$, \ldots be the underlying variables of the symmetric functions and, for any finite sequence $\mu=(\mu_1,\mu_2,\ldots,\mu_k)$, let us denote by  $x^{\mu}=x_1^{\mu_1} x_2^{\mu_2} \cdots x_k^{\mu_k}$. 
 
The scalar product of any symmetric function with $h_{\nu}$ extracts the coefficient of $m_{\nu}$ in the expansion in basis of monomial functions. So, $\pletH{\lambda}{\mu}{\nu}$ can be interpreted 
as the coefficient of the monomial $x^{\nu}$ in $h_{\lambda}[s_{\mu}]$. 

Instead of working with symmetric functions (with infinitely many variables) we can work with symmetric polynomials in $N$ variables, provided $N$ is at least the length of $\lambda$ and $\nu$. 

We now use the expansion of the Schur polynomial $s_{\mu}(x_1,x_2,\ldots,x_N)$ into monomials (\cite{MR1464693}):
\begin{eqnarray*}
s_{\mu}(x_1,x_2,\ldots,x_N) = \sum_{T\in t(\mu,N)} x^{\rho(T)} = \sum_{T \in t(\mu, N)} x_1^{\rho_1(T)}x_2^{\rho_2(T)} \cdots x_N^{\rho_N(T)}.
\end{eqnarray*}
Since the plethysm $f[g]$ can be seen as the evaluation $f(x^{u_1}, x^{u_2}, \dots)$ once we have written $g$ as a sum of monomials, $g= \sum_i x^{u_i}$,
and the complete sum $h_n$ can be defined as the sum of all monomials of degree $n$, if $k$ is the number of SSYT of $t(\mu, N)$, we have that
\begin{eqnarray*}
h_n[s_\mu] = h_n\left[ \sum_{T \in t(\mu, N)} x_1^{\rho_1(T)}x_2^{\rho_2(T)} \cdots x_N^{\rho_N(T)} \right] = \sum_{m_1+\dots + m_k=n} \prod_{i=1}^k x^{\rho(T_i) \cdot m_i} 
\end{eqnarray*}
where $x^{ \rho(T_i) \cdot m_{i}}$ means $\left( x^{\rho(T_i)} \right)^{m_{i}} =  x_1^{\rho_1(T_i) \cdot m_{i}} \cdots x_N^{\rho_N(T_i)\cdot m_{i}}$.

We have the following decomposition in case of the complete sum $h_\lambda$
\begin{eqnarray}\label{EquationH}
h_\lambda [s_\mu] = \prod_i h_{\lambda_i} [s_\mu] = \prod_i \left(\sum_{m_{i1}+\dots + m_{ik}=\lambda_i } \prod_{j=1}^k x^{\rho(T_j)\cdot m_{ij}}\right)	  .
\end{eqnarray}

We want to extract the coefficient of $x^\nu$ in ($\ref{EquationH}$), so we take the count of the $m_{ij}$ such that:
\begin{eqnarray}\label{eq2}
 \sum_{j=1}^k m_{ij} = \lambda_i \hspace{1cm} \text{and} \hspace{1cm} \sum_{i,j} m_{ij} \cdot \rho_n(T_j)= \nu_n .
\end{eqnarray}
Finally, we just realise that the elements $m_{ij}$ form a matrix $\mathcal{M}=(m_{iT})$, whose rows are indexed by $i\in\{1,\dots,N\}$ and whose columns are indexed by the tableaux $T\in t(\mu,N)$, and the conditions in ($\ref{eq2}$) are exactly
the row sum conditions of $\mathcal{M}$ and the column sum conditions of $\mathcal{MP}_{\mu, N}$ of the Proposition. 

\end{proof}

We show an example how the result works: consider the partition $\mu=(2)$, the finite sequences $\lambda = (\lambda_1,\lambda_2)$ and $\nu=(\nu_1,\nu_2,\nu_3)$, and take $N=3$. First, we compute the set $t(\nu,N)$
\begin{center}
\begin{tikzpicture}
\draw (0,0.5) rectangle (1,1);
\draw (1.5,0.5) rectangle (2.5, 1);
\draw (3,0.5) rectangle (4,1);
\draw (4.5,0.5) rectangle (5.5,1);
\draw (6,0.5) rectangle (7,1);
\draw (7.5, 0.5) rectangle (8.5,1);

\draw (0.5,0.5) -- (0.5,1);
\draw (2,0.5) -- (2,1);
\draw (3.5,0.5) -- (3.5,1);
\draw (5,0.5) -- (5,1);
\draw (6.5,0.5) -- (6.5,1);
\draw (8,0.5) -- (8,1);

\node at (0.5,0.25) {$T_1$};
\node at (2,0.25) {$T_2$};
\node at (3.5,0.25) {$T_3$};
\node at (5,0.25) {$T_4$};
\node at (6.5,0.25) {$T_5$};
\node at (8,0.25) {$T_6$};

\node at (0.25,0.75) {$1$};
\node at (0.75,0.75) {$1$};

\node at (2.25,0.75) {$2$};
\node at (1.75,0.75) {$1$};

\node at (3.25,0.75) {$1$};
\node at (3.75,0.75) {$3$};

\node at (5.25,0.75) {$2$};
\node at (4.75,0.75) {$2$};

\node at (6.25,0.75) {$2$};
\node at (6.75,0.75) {$3$};

\node at (8.25,0.75) {$3$};
\node at (7.75,0.75) {$3$};

\end{tikzpicture}
\end{center}
Then, the corresponding $\mathcal{P}_{\mu N}$ matrix is
\begin{eqnarray*}
\left(
\begin{array}{cccccc}
2&1&1&0&0&0 \\
0&1&0&2&1&0 \\
0&0&1&0&1&2 \\
\end{array}\right)^t
\end{eqnarray*}

So, we are considering the matrices $\mathcal{M}$ with positive integer entries of the form
\begin{eqnarray*}
\left(
\begin{array}{cccccc}
m_{1T_1} & m_{1T_2} & m_{1T_3} & m_{1T_4} & m_{1T_5} & m_{1T_6} \\
m_{2T_1} & m_{2T_2} & m_{2T_3} & m_{2T_4} & m_{2T_5} & m_{2T_6} \\
m_{3T_1} & m_{3T_2} & m_{3T_3} & m_{3T_4} & m_{3T_5} & m_{3T_6} \\
\end{array}\right)
\end{eqnarray*}
that satisfy the following conditions
\begin{eqnarray*}
\begin{array}{cc}
\left\{ \begin{array}{l} \sum_{j} m_{1T_j} = \lambda_1 \\[0.3em] \sum_j m_{2T_j} = \lambda_2 \\[0.3em] \sum_j m_{3T_j} = 0 \end{array}\right. \hspace{1.6cm} & \left\{ \begin{array}{l} \sum_i \left( 2m_{iT_1} + m_{iT_2} + m_{iT_3} \right) = \nu_1 \\[0.3em] \sum_i \left( m_{iT_2} + 2m_{iT_4} + m_{iT_5} \right) = \nu_2 \\[0.3em] \sum_i \left( m_{iT_3} + m_{iT_5} + 2m_{iT_6} \right)= \nu_3 \\
 \end{array} \right. 
\end{array}
\end{eqnarray*}

The Proposition \ref{coeffs pleth} allows us to give the following result.
\begin{prop}
Fix a partition $\mu$ and a finite sequence of positive integers $\lambda$. Consider a finite sequence of integers $\nu$ such that $|\lambda| \cdot |\mu| = |\nu|$. Let $N$ be an integer bigger or equal than $\ell(\lambda)$, $\ell(\mu)$ and $\ell(\nu)$. Then, the coefficient $b_{\lambda \mu}^\nu$ will be zero unless that $\nu$ satisfies the following conditions
\begin{itemize}
\item For any $j=1,\dots, N-\ell(\mu)$, $\nu_j \leq |\lambda|\cdot \mu_1 $.
\item For any $j> N- \ell(\mu)$, $\nu_j \leq |\lambda|\cdot \mu_{j-(N-\ell(\mu))} $.
\end{itemize}

\end{prop}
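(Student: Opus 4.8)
The plan is to read the necessary conditions directly off the combinatorial model of Proposition \ref{coeffs pleth}. If $b_{\lambda\mu}^\nu\neq 0$ then the polytope $Q(\lambda,\mu,\nu,N)$ contains an integer matrix $\MM=(m_{i,T})$, and the column-sum condition of $\MM\,\PP{\mu}{N}$ reads
\[
\nu_j=\sum_{i=1}^{N}\sum_{T\in t(\mu,N)} m_{i,T}\,\rho_j(T),\qquad \sum_{i,T} m_{i,T}=\sum_i\lambda_i=|\lambda|.
\]
Thus each $\nu_j$ is a non-negative combination, with coefficients summing to $|\lambda|$, of the occurrence numbers $\rho_j(T)$, the number of times the value $j$ appears in an SSYT $T$ of shape $\mu$ with entries in $\{1,\dots,N\}$. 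So it suffices to bound $\rho_j(T)$ uniformly over $T$ and multiply by $|\lambda|$.

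First I would establish the coarse bound behind the first bullet. Since a semistandard tableau is strictly increasing down columns, a fixed value $j$ occurs at most once in each column of $T$; as $\mu$ has exactly $\mu_1$ columns, $\rho_j(T)\le \mu_1$ for every $j$. Summing over the matrix gives $\nu_j\le |\lambda|\,\mu_1$ for all $j$, which in particular covers the range $j\le N-\ell(\mu)$.

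The refined inequality of the second bullet is the heart of the matter, and here I would exploit the restriction that entries are at most $N$. If $j$ sits in a cell of column $c$, the cells strictly below it carry strictly increasing values in $\{j+1,\dots,N\}$, so at most $N-j$ of them are available. Writing $d=N-j$ (so that $d<\ell(\mu)$ precisely when $j>N-\ell(\mu)$), this forces every occurrence of $j$ to lie within the bottom $d+1$ cells of its column, and it forces the lower part of each hosting column to be filled by the few admissible large values. The target is to parlay this scarcity into $\rho_j(T)\le \mu_{\ell(\mu)-d}=\mu_{\,j-(N-\ell(\mu))}$, whence $\nu_j\le|\lambda|\,\mu_{\,j-(N-\ell(\mu))}$.

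I expect this last occurrence estimate for large values to be the main obstacle. The naive per-column argument only recovers the coarse bound $\mu_1$, since a single value can in fact be pushed into the bottom cell of every column; the sharpening to $\mu_{\ell(\mu)-d}$ must come from the global shortage of admissible large entries, and I anticipate that it cannot be obtained tableau-by-tableau but must be coupled with the hypothesis that $\nu$ is a partition, so that the mass producing a large $\nu_j$ competes with the necessarily larger parts $\nu_1\ge\cdots\ge\nu_j$. Concretely I would prove the bound by a pigeonhole/conjugate-partition argument on how the $d+1$ values $\{j,\dots,N\}$ can be distributed among the columns of $\mu$, and then check consistency with the sortedness of $\nu$ through the companion tail estimate $\sum_{i\ge j}\nu_i\le|\lambda|\,(\mu_1+\cdots+\mu_{N-j+1})$, obtained the same way from the co-ideal of cells carrying a value $\ge j$. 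Controlling this interaction between column-strictness, the ceiling $N$ on entries, and the ordering of $\nu$ is where the genuine work lies.
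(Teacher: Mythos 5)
Your reduction to bounding $\rho_j(T)$ and your proof of the first inequality are exactly the paper's argument (the paper records only the one-line Observation that both bounds come from estimating the number of $j$'s in a tableau of $t(\mu,N)$ and substituting into the $j$-th column sum condition), and the column-strictness bound $\rho_j(T)\le\mu_1$ is correct. The second inequality, however, you leave as a plan, and you are right to be suspicious of it: the requirement that the $\mu'_c-i$ cells below an occurrence of $j$ in a column of height $\mu'_c$ carry distinct values from $\{j+1,\dots,N\}$ forces that occurrence into row $i\ge \mu'_c-(N-j)$, which equals $j-(N-\ell(\mu))$ only for columns of full height $\ell(\mu)$; in shorter columns $j$ may sit arbitrarily high, so $j$ can occupy a cell of every column and $\rho_j(T)$ can reach $\mu_1$ for every $j\le N$. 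Thus the refined per-tableau estimate you would need is false for non-rectangular $\mu$, and this part of your proof is a genuine gap, not a deferrable technicality.

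Moreover, the rescue you sketch (coupling the estimate with the sortedness of $\nu$) cannot close the gap, because the second inequality of the Proposition itself fails at the minimal admissible $N$. Take $\lambda=(1)$, $\mu=(3,1)$, $\nu=(2,2)$ and $N=2=\max\{\ell(\lambda),\ell(\mu),\ell(\nu)\}$: then $b_{\lambda\mu}^{\nu}=\left\langle s_{(3,1)},h_{(2,2)}\right\rangle=K_{(3,1),(2,2)}=1\neq 0$, while the condition for $j=2>N-\ell(\mu)=0$ would require $\nu_2\le|\lambda|\cdot\mu_2=1$. (The tableau of shape $(3,1)$ with first row $1,2,2$ and second row $2$ already exhibits $\rho_2=3>\mu_2$.) So for the second bullet there is nothing correct to prove unless one adds a hypothesis --- for instance that $\mu$ is a rectangle, where every column has full height and your row-localization argument does give $\rho_j(T)\le\mu_{j-(N-\ell(\mu))}$, or that $N-\ell(\mu)\ge\ell(\nu)$, in which case only the coarse bound is doing any work. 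What is salvageable in general is precisely the tail estimate you wrote down, $\sum_{i\ge j}\nu_i\le|\lambda|\cdot(\mu_1+\cdots+\mu_{N-j+1})$, obtained by summing the column conditions over $i\ge j$ and noting that each column of $T$ contains at most $N-j+1$ entries that are $\ge j$; I would recommend proving and stating that inequality instead.
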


\begin{observation}
Both inequalities come from the fact that we can bound the number of times that $j$ appears in any tableau $T$ of $t(\mu,N)$ and use those estimates in the $j-$th column sum condition of $\mathcal{MP}_{\mu N}$. 
\end{observation}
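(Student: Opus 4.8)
The plan is to read the two inequalities directly off the description of $b_{\lambda \mu}^\nu$ as the number of lattice points of $Q(\lambda,\mu,\nu,N)$ provided by Proposition \ref{coeffs pleth}, exactly along the lines of the Observation following the statement: bound the number of occurrences of a fixed value $j$ in an arbitrary tableau of $t(\mu,N)$, and then feed this estimate into the $j$-th column sum condition of $\mathcal{MP}_{\mu,N}$.

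First I would set up the reduction. If $b_{\lambda \mu}^\nu \neq 0$ then, by Proposition \ref{coeffs pleth}, the set $Q(\lambda,\mu,\nu,N)$ is non-empty, so we may fix a matrix $\mathcal{M}=(m_{i,T})$ satisfying the row and column sum conditions. The row sum conditions give $\sum_{i,T} m_{i,T}=\sum_i \lambda_i = |\lambda|$, while the $j$-th column sum condition of $\mathcal{MP}_{\mu,N}$ reads $\nu_j=\sum_{i,T} m_{i,T}\,\rho_j(T)$, where $\rho_j(T)$ is the number of entries equal to $j$ in $T$. Since the $m_{i,T}$ are non-negative integers, this immediately yields $\nu_j \le |\lambda|\cdot \max_{T\in t(\mu,N)}\rho_j(T)$, so the whole statement reduces to a purely tableau-theoretic estimate on $\rho_j(T)$.

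For the first inequality I would use only that the columns of a semistandard tableau are \emph{strictly} increasing: a fixed value can occur at most once in each of the $\mu_1$ columns of the shape $\mu$, hence $\rho_j(T)\le \mu_1$ for every $T$ and every $j$. Substituting gives $\nu_j\le |\lambda|\,\mu_1$, which in particular covers the range $j\le N-\ell(\mu)$.

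The refined inequality for $j>N-\ell(\mu)$ is the heart of the matter, and where I expect the real work to lie. Here one must exploit that the entries are capped by $N$: in a column of length $\mu'_c$, the entry in row $r$ is at most $N-(\mu'_c-r)$, because the $\mu'_c-r$ strictly larger entries below it already consume that many of the available values up to $N$. Thus a large value $j$ can occupy only cells lying low enough in their columns, namely those with $\mu'_c-r\le N-j$; translating this restriction through the conjugate partition and counting the admissible cells should produce the sharper bound $\rho_j(T)\le \mu_{j-(N-\ell(\mu))}$, after which substitution finishes the proof. The main obstacle is precisely this count: the crude column-by-column argument of the previous paragraph never drops below $\mu_1$, so to reach the smaller value $\mu_{j-(N-\ell(\mu))}$ one has to track how the ceiling $N$ interacts with the entire column-length profile $\mu'$ (equivalently, with the top rows of $\mu$), and to use that $\nu$ is weakly decreasing so that the stronger constraint is imposed only on the largest values $j$. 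Organizing this bookkeeping by the rows of $\mu$, rather than its columns, is the step I would expect to require the most care.
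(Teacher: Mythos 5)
Your reduction is exactly the one the paper intends: if $b_{\lambda \mu}^{\nu}\neq 0$, Proposition~\ref{coeffs pleth} provides a matrix $\mathcal{M}=(m_{i,T})$ with $\sum_{i,T}m_{i,T}=|\lambda|$, and the $j$-th column sum condition gives $\nu_j=\sum_{i,T}m_{i,T}\,\rho_j(T)\leq |\lambda|\cdot\max_{T\in t(\mu,N)}\rho_j(T)$, so everything rests on bounding $\rho_j(T)$. Your proof of the first inequality ($\rho_j(T)\leq\mu_1$ because columns are strictly increasing) is complete and correct.

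The second inequality is where your argument stops being a proof: you say the count of admissible cells ``should produce'' $\rho_j(T)\leq\mu_{j-(N-\ell(\mu))}$ but you never carry it out, and this is not merely an expository gap --- the estimate you are aiming for is false, so no amount of care will close it. The constraint you isolate (a cell in row $r$ of a column of length $\mu'_c$ carries an entry at most $N-(\mu'_c-r)$) only forces a large value $j$ to sit low in \emph{long} columns; every short column can still carry a $j$ in its bottom cell, so the column-by-column count never drops below $\mu_1$. Concretely, for $\mu=(2,1)$ and $N=3$, the tableau $T$ with first row $(1,3)$ and second row $(3)$ lies in $t(\mu,3)$ and has $\rho_3(T)=2>\mu_2=1$, although $3>N-\ell(\mu)=1$. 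This defeats the Proposition itself: $b_{(1),(2,1)}^{(1,0,2)}=\langle s_{21},h_1h_2\rangle=K_{(2,1),(1,0,2)}=1\neq 0$ even though $\nu_3=2>|\lambda|\cdot\mu_2=1$; and even restricting $\nu$ to partitions, the multiset of shape-$(3,1)$ tableaux with weights $(2,1,1)$, $(1,2,1)$, $(1,1,2)$ shows $b_{(3),(3,1)}^{(4,4,4)}\geq 1$ while $\nu_3=4>3=|\lambda|\cdot\mu_2$. The salvageable version of this kind of refinement is the partial-sum estimate $\rho_1(T)+\cdots+\rho_j(T)\leq\mu_1+\cdots+\mu_j$ (equivalently $\nu_1+\cdots+\nu_j\leq|\lambda|\cdot(\mu_1+\cdots+\mu_j)$), which is what the paper actually uses later in Lemma~\ref{AuxLem}; a single-value bound of the form $\rho_j(T)\leq\mu_{j-(N-\ell(\mu))}$ does not hold.
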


\section{Combinatorial Proof of Theorem \ref{THM1.1}}
\label{sec:res}

Once we have established which are the \emph{$h$-plethysm coefficients} with the combinatorial interpretation, we have to prove the stability properties for them.

We are going to separate the stability properties into two groups, depending on which partitions are depending on $n$: properties for the inner partition and properties for the outer partition,
where in the plethysm $h_\lambda [s_\mu]$, the inner partition is $\mu$ and the outer partition is $\lambda$.

\subsection{Stability properties for the outer partition}

Consider the plethysm coefficients $a_{\lambda(n), \mu(n)}^{\nu(n)}$ and $b_{\lambda(n), \mu(n)}^{\nu(n)}$ where the partition $\mu(n)$ does not depend on $n$, i.e., $\mu(n)$ is always equal to a fix partition $\mu$. 

We need a few more notation before starting with the proofs. For any sequence of integers $\alpha= (\alpha_1,\dots, \alpha_N)$, we denote by $\alpha^+$ the sequence of cumulative sums, $\alpha^+ = (\alpha_1, \alpha_1 + \alpha_2, \dots, \alpha_1 + \cdots + \alpha_{N-1})$, and by $\| \alpha \|$
the corresponding integer $\| \alpha \| = |\alpha^+ | = \sum_{j=1}^N (N+1-j)\cdot \alpha_j$.

We also define the \emph{fake length} of any sequence of non-negative integers as the length of the sequence, and we denote it by $\ell\ell(\lambda)$. For this result we need to use the fake length instead of the usual length because the zero parts that we can add to the partitions affect to the value of $\|lambda\|$ significantly. For example, consider $\lambda=(2,1,1)$. We have that $\ell\ell(\lambda)=3$ and $\|\lambda\|= 5$. But, if we write this partition adding two zeros, $\lambda_1=(2,1,1,0,0)$, we have that $\ell\ell(\lambda_1)=5 $ and $\|\lambda_1\|=13$. 

\begin{thm}\label{Q1}
 Let $\mu$ be a partition and $\lambda$ and $\nu$ be finite sequences of integers. Assume that $\ell\ell(\lambda)=\ell\ell(\mu)=\ell\ell(\nu)$. Then, the sequence with general term 
$b_{\lambda + (n), \mu}^{\nu + n\cdot\mu}$ is constant when $n \geq |\lambda | \cdot \|\mu\| - \|\nu\| - \lambda_1$.
\end{thm}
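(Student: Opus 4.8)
The plan is to run the combinatorial model of Proposition~\ref{coeffs pleth} against the explicit injection--then--surjection scheme announced in the introduction. With $N=\ell\ell(\lambda)=\ell\ell(\mu)=\ell\ell(\nu)$, the coefficient $b_{\lambda+(n),\mu}^{\nu+n\cdot\mu}$ counts the matrices in $Q(\lambda+(n),\mu,\nu+n\cdot\mu,N)$. Among the tableaux of $t(\mu,N)$ there is a distinguished one, $T_1$, whose $i$-th row is filled entirely with the entry $i$: it is the unique element of $t(\mu,N)$ of weight $\rho(T_1)=\mu$ (that is, $K_{\mu\mu}=1$). First I would build the map $\phi_n$ from $Q(\lambda+(n),\mu,\nu+n\cdot\mu,N)$ to $Q(\lambda+(n+1),\mu,\nu+(n+1)\cdot\mu,N)$ that adds $1$ to the single entry $m_{1,T_1}$. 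Passing from $n$ to $n+1$ only raises the first row sum by $1$ and only adds $\rho(T_1)=\mu$ to the column sums of $\mathcal{M}\mathcal{P}_{\mu,N}$, so $\phi_n$ sends solutions to solutions and is visibly injective. Its image is exactly the set of solutions at level $n+1$ whose $(1,T_1)$-entry is $\ge 1$, since one recovers a level-$n$ solution by subtracting $1$ there. Hence the whole problem reduces to showing that for $n$ large every solution has a positive $(1,T_1)$-entry.

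The key step is to apply the linear form $\|\cdot\|$ to the vector of column sums of $\mathcal{M}\mathcal{P}_{\mu,N}$. Since the $j$-th column sum is $\sum_{i,T} m_{i,T}\,\rho_j(T)$ and the column-sum conditions prescribe the vector $(\nu_j+n\mu_j)_j$, linearity gives
$$\sum_{T\in t(\mu,N)} M_T\,\|\rho(T)\| \;=\; \|\nu\|+n\,\|\mu\|, \qquad M_T:=\sum_i m_{i,T},$$
where I used that $\|\cdot\|$ is linear and that the three sequences share the fake length $N$. Now $\|\cdot\|$ is a sum of partial sums, hence monotone for the dominance order; every weight $\rho(T)$ is dominated by $\mu$, and $T_1$ is the only tableau attaining $\rho=\mu$. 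Therefore $\|\rho(T)\|\le\|\mu\|$ for all $T$, with the strict integer gap $\|\rho(T)\|\le\|\mu\|-1$ whenever $T\ne T_1$.

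Feeding this gap into the identity, together with the total-mass equality $\sum_T M_T=|\lambda|+n$, I would lower-bound the column total and obtain $M_{T_1}\ge n+\|\nu\|-|\lambda|\,(\|\mu\|-1)$. To pass from the column total $M_{T_1}$ to the single entry $m_{1,T_1}$, I subtract the mass of the rows $i\ge 2$: these sum to at most $\sum_{i\ge2}\lambda_i=|\lambda|-\lambda_1$, independently of $n$, so
$$m_{1,T_1}\;\ge\; M_{T_1}-(|\lambda|-\lambda_1)\;\ge\; n-\bigl(|\lambda|\,\|\mu\|-\|\nu\|-\lambda_1\bigr).$$
Thus $m_{1,T_1}\ge 1$ as soon as $n\ge|\lambda|\,\|\mu\|-\|\nu\|-\lambda_1+1$; equivalently $\phi_n$ is surjective for every $n\ge|\lambda|\,\|\mu\|-\|\nu\|-\lambda_1$, whence $b_{\lambda+(n),\mu}^{\nu+n\cdot\mu}=b_{\lambda+(n+1),\mu}^{\nu+(n+1)\cdot\mu}$ for all such $n$, and the sequence is constant from $n=|\lambda|\,\|\mu\|-\|\nu\|-\lambda_1$ on.

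The main obstacle I anticipate is the bookkeeping that yields the \emph{sharp} threshold. The conceptual heart is the monotonicity of $\|\cdot\|$ under dominance combined with the fact that $T_1$ is the unique maximiser; but producing $\lambda_1$ (rather than $|\lambda|$) in the final bound forces the two-stage estimate, first on $M_{T_1}$ via the weight gap and then on $m_{1,T_1}$ via the bounded mass of the lower rows. I would also take care to verify that $T_1$ genuinely lies in $t(\mu,N)$ (ensured by $N\ge\ell(\mu)$) and that the fake-length convention is exactly what makes $\|\nu+n\cdot\mu\|=\|\nu\|+n\,\|\mu\|$ hold with a common $N$ across the three sequences.
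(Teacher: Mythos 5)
Your proposal is correct and follows essentially the same route as the paper: the same injection $\varphi_n$ incrementing $m_{1,T_1}$ for the unique tableau $T_1$ of weight $\mu$, the same cumulative-sum statistic $\|\cdot\|$ with the integer gap $\|\rho(T)\|\le\|\mu\|-1$ for $T\neq T_1$, and the same two-stage bound (first on the column total of $T_1$, then subtracting at most $|\lambda|-\lambda_1$ for the rows $i\ge 2$) yielding the identical threshold $n\ge|\lambda|\,\|\mu\|-\|\nu\|-\lambda_1$.
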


\begin{proof}
After Proposition \ref{coeffs pleth}, 
\begin{eqnarray*}
b_{\lambda + (n), \mu}^{\nu + n\cdot\mu} =\operatorname{Card} \left( Q(\lambda +(n), \mu, \nu + n \cdot\mu, N) \right).
\end{eqnarray*}
Set $E(n)$ for $Q(\lambda +(n), \mu, \nu + n\cdot \mu , N)$. 

Let $T_1$ be the tableau in $t(\mu ,N)$ whose $i-$th row is filled with occurrences of $i$, for each $i$. I.e.,
\begin{center}
 \begin{tikzpicture}
\draw (0,0) rectangle (0.5,0.5);
\node at (0.25,0.25) {1};
\draw (0.5,0) rectangle (1,0.5);
\node at (0.75,0.25) {1};
\draw  (1,0) rectangle (1.5,0.5);
\node at (1.25,0.25) {1};
\draw  (1.5,0) rectangle (2,0.5);
\node at (1.75,0.25) {1};
\draw  (2,0) rectangle (2.5,0.5);
\node at (2.25,0.25) {1};
\draw  (2.5,0) rectangle (3,0.5);
\node at (2.75,0.25) {1};

\draw (0,0.5) rectangle (0.5,1);
\node at (0.25,0.75) {2};
\draw (0.5,0.5) rectangle (1,1);
\node at (0.75,0.75) {2};
\draw  (1,0.5) rectangle (1.5,1);
\node at (1.25,0.75) {2};
\draw  (1.5,0.5) rectangle (2,1);
\node at (1.75,0.75) {2};

\draw (0,1) rectangle (0.5,1.5);
\node at (0.25,1.25) {3};
\draw (0.5,1) rectangle (1,1.5);
\node at (0.75,1.25) {3};
\draw  (1,1) rectangle (1.5,1.5);
\node at (1.25,1.25) {3};

\draw (0,1.5) rectangle (0.5,2);
\node at (0.25, 1.75) {$\dots$};
 \end{tikzpicture}.
\end{center}

So, the first row of $\mathcal{P}_{\mu,N}$ is $\rho(T_1)= \mu$. 

Consider the injection $\varphi_n : E(n) \hookrightarrow E(n+1)$ that maps any matrix $\mathcal{M} = (m_{iT}) \in E(n)$ to the matrix $\mathcal{M}^\prime=(m^\prime_{iT}) \in E(n+1)$ where the coefficient $m_{1,T_1}^\prime = m_{1,T_1} + 1$ and all other coefficients are unchanged.

We contend that $\varphi_n$ is also surjective for $n$ big enough. The map $\varphi_n$ is surjective if and only if for all $\mathcal{M}^\prime = (m^\prime_{iT}) \in E(n+1)$, the entry $m_{1,T_1}^\prime$ is non--zero. 
So, in order to prove surjectivity, we will show that $m_{1,T_1}^\prime >0$.

Let $\mathcal{M}^\prime = (m^\prime_{iT})  \in E(n+1)$. Observe that among all tableaux in $t(\mu ,N)$, the tableau $T_1$ is the unique one with maximum weight for the \emph{dominance ordering} (\cite{MR1354144} I.1). 
Then,
\begin{eqnarray}\label{est1}
\left\{ \begin{array}{lcl}
|| \rho(T) || \leq ||\mu ||-1 & \text{ if } & T \neq T_1 \\[0.3em]
|| \rho (T_1) || = ||\mu || & \text{ if } & T = T_1
\end{array} \right.
\end{eqnarray}

Looking at the column sum conditions for $\mathcal{M}^\prime \mathcal{P}_{\mu ,N}$, for any $j$, we know that 
\begin{eqnarray*}
\sum_{i, T} m_{iT} \cdot \rho_j(T) = \nu_j + (n+1)\cdot \mu_j .
\end{eqnarray*} 

Consider the cumulative sums
\begin{eqnarray*}
 \sum_j \sum_{i,T} m_{iT}  \cdot \rho_j^+(T)= \sum_j \left(\nu_j^+ + (n+1)  \cdot \mu_j^+\right) .
 \end{eqnarray*}
We can write this as
\begin{eqnarray*}
\sum_{i,T} m_{iT}  \cdot ||\rho(T)||= ||\nu || + (n+1) \cdot  ||\mu ||  .
\end{eqnarray*}
Isolating the factor corresponding to $T_1$ from the other tableaux and applying the estimates in ($\ref{est1}$) to each part, we obtain that
\begin{eqnarray*}
||\nu ||+ (n+1)\cdot ||\mu || \leq  ||\mu ||\cdot \sum_i m_{iT_1} + \left(||\mu ||-1\right) \cdot \sum_{i, T\neq T_1} m_{iT} .
\end{eqnarray*}
Now, extracting the factor $(\|\mu \| -1)$ 
\begin{eqnarray*}
||\nu ||+ (n+1)\cdot ||\mu || &\leq& \left(||\mu ||-1\right) \cdot \left( \sum_i m_{iT_1} + \sum_{i, T\neq T_1} m_{iT} \right) + \sum_i m_{iT_1} .
\end{eqnarray*}
Observe that if $i>1$, using the row sum conditions for $\mathcal{M}$, $m_{iT_1} \leq \lambda_i$. So the sum of all entries different from $m_{1T_1}$ in the first column of $\mathcal{M}$ is, at most, $|\overline{\lambda}|$. 
Recalling that $\sum_{i,T} m_{iT} = |\lambda | + (n+1)$, we obtain that
\begin{eqnarray*}
||\nu ||+ (n+1) \cdot||\mu || &\leq& \left(||\mu ||-1\right) \cdot \left( |\lambda | + (n+1) \right) + m_{1T_1} + |\overline{\lambda}| .
\end{eqnarray*}
This inequality is simplified as
\begin{eqnarray}\label{eq54}
 \|\nu \| + (n+1) -  \| \mu \| \cdot |\lambda | + \lambda_1 \leq m_{1T_1} .
\end{eqnarray}

Looking at ($\ref{eq54}$), $m_{1T_1}> 0$ as soon as $\|\nu \| + (n+1) -  \| \mu \| \cdot |\lambda |  + \lambda_1> 0$. 

Then, for $n \geq \| \mu\| \cdot |\lambda | - \|\nu\| - \lambda_1$, we have proved that our sequence is constant. 

\end{proof}

\begin{observation}
The relation between the coefficients $b_{\lambda \mu}^\nu$ and $a_{\lambda, \mu}^\nu$ in $\eqref{a in b}$ shows that the stability in one case implies the stability in the other one. 
So, as a consequence we have that the sequence with general term $a_{\lambda + (n), \mu}^{\nu + n\cdot\mu}$ stabilizes. 
\end{observation}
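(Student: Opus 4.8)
The plan is to feed the stabilization just obtained for the $h$-plethysm coefficients into the alternating decomposition of Lemma \ref{a in b}, and to argue that a finite signed sum of eventually constant sequences is again eventually constant. Concretely, I would first fix a single integer $N=N'$ large enough to bound the lengths of $\lambda$, $\mu$ and $\nu$; since adding $(n)$ only increases the first part of $\lambda$, and since $\nu + n\cdot\mu$ has length at most $\max(\ell(\nu),\ell(\mu))$, this one $N$ bounds the lengths of $\lambda+(n)$ and $\nu+n\cdot\mu$ for \emph{every} $n\geq 0$. Applying Lemma \ref{a in b} to the partitions $\lambda+(n)$ and $\nu+n\cdot\mu$ then yields, with a number $(N!)^2$ of summands that does not depend on $n$,
\begin{eqnarray*}
a_{\lambda+(n),\,\mu}^{\nu+n\cdot\mu}
 = \sum_{\sigma,\tau\in\mathfrak{S}_N}\varepsilon(\sigma)\varepsilon(\tau)\,
 b_{(\lambda+(n))+\omega(\sigma),\,\mu}^{(\nu+n\cdot\mu)+\omega(\tau)} .
\end{eqnarray*}

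Next I would reorganize each summand so that the dependence on $n$ sits in the exact shape required by Theorem \ref{Q1}. Because the operations on partitions are coordinatewise vector additions they commute, so for fixed $\sigma,\tau$ one has $(\lambda+(n))+\omega(\sigma)=\lambda_\sigma+(n)$ and $(\nu+n\cdot\mu)+\omega(\tau)=\nu_\tau+n\cdot\mu$, where $\lambda_\sigma:=\lambda+\omega(\sigma)$ and $\nu_\tau:=\nu+\omega(\tau)$ are fixed length-$N$ integer sequences. Thus each summand is precisely a sequence $b_{\lambda_\sigma+(n),\,\mu}^{\nu_\tau+n\cdot\mu}$, i.e. the object to which Theorem \ref{Q1} applies after replacing $\lambda$ by $\lambda_\sigma$ and $\nu$ by $\nu_\tau$. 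Writing $\lambda_\sigma$, $\mu$, $\nu_\tau$ with the common length $N$ (padding $\mu$ by trailing zeros, which leaves $s_\mu$ and hence the coefficient unchanged) makes the fake-length hypothesis $\ell\ell(\lambda_\sigma)=\ell\ell(\mu)=\ell\ell(\nu_\tau)=N$ hold, so Theorem \ref{Q1} gives that this summand is constant as soon as $n\geq |\lambda_\sigma|\cdot\|\mu\|-\|\nu_\tau\|-(\lambda_\sigma)_1$.

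The only place I expect genuine care is the two degenerate families of summands. Since $\omega(\sigma)_j=\sigma(j)-j$ can be negative, a coordinate $(\lambda_\sigma)_j$ with $j\geq 2$ may be negative; as only the first coordinate of $\lambda_\sigma+(n)$ receives the shift by $n$, such a coordinate stays negative for all $n$, and the convention $h_r=0$ for $r<0$ forces $b_{\lambda_\sigma+(n),\,\mu}^{\nu_\tau+n\cdot\mu}=0$ for every $n$. The same remark handles any coordinate of $\nu_\tau+n\cdot\mu$ lying above a zero part of $\mu$, which is constant in $n$ and, if negative, kills the coefficient. In all these cases the summand is the constant sequence $0$, hence trivially stable, while the remaining summands are covered by Theorem \ref{Q1} as above.

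Finally I would collect the thresholds: there are finitely many pairs $(\sigma,\tau)$, each corresponding summand is eventually constant, and taking $n$ beyond the maximum of these finitely many thresholds makes every summand constant simultaneously. Since the signs $\varepsilon(\sigma)\varepsilon(\tau)$ do not depend on $n$, the signed sum is then constant, and therefore so is $a_{\lambda+(n),\,\mu}^{\nu+n\cdot\mu}$. The heart of the argument is thus not a new computation but the two bookkeeping facts flagged above: that $N$ (and hence the number of summands) can be chosen independently of $n$, and that the non-admissible shifts $\omega(\sigma),\omega(\tau)$ produce only identically-zero summands rather than sequences falling outside the scope of Theorem \ref{Q1}.
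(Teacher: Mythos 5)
Your proposal is correct and follows exactly the route the paper intends for this observation: apply Lemma \ref{a in b} to $\lambda+(n)$ and $\nu+n\cdot\mu$ with an $N$ chosen once for all $n$, recognize each of the finitely many signed summands as a sequence $b_{\lambda_\sigma+(n),\,\mu}^{\nu_\tau+n\cdot\mu}$ covered by Theorem \ref{Q1} (whose hypotheses indeed allow arbitrary integer sequences, so even the summands with negative shifted coordinates, which vanish, cause no trouble), and take the maximum of the finitely many thresholds. The paper leaves all of this implicit, so your write-up is simply a fully detailed version of the same argument.
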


We now use again the main idea of the proof of Theorem \ref{Q1} for another stability property. 
\begin{thm}\label{P1}
Let $\mu$ be a partition and $\lambda$ and $\nu $ be finite sequences of integers. Consider the following \emph{$h$-plethysm coefficients}
\begin{eqnarray*}
b_{\lambda + (n), \mu}^{\nu +\left(|\mu |\cdot n\right)}= \left\langle h_{\lambda + (n)} [s_\mu], h_{\nu + \left(|\mu |\cdot n\right)} \right\rangle
\end{eqnarray*}
The sequence of coefficients stabilizes. It has limit zero whenever $\ell(\mu) > 1$.
\end{thm}

\begin{observation}
 The case in which $\mu$ has one part is a particular case of Theorem \ref{Q1}. We include it in the proof because we obtain a better bound.  
\end{observation}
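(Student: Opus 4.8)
The plan is to reuse, almost verbatim, the polytope-counting strategy of Theorem \ref{Q1}, applied now to the set
\[
E(n) = Q\bigl(\lambda+(n),\, \mu,\, \nu+(|\mu|\cdot n),\, N\bigr)
\]
supplied by Proposition \ref{coeffs pleth}, but splitting on $\ell(\mu)=1$ versus $\ell(\mu)>1$ from the outset. The single quantity that governs everything is $\rho_1(T)$, the number of $1$'s in a tableau $T\in t(\mu,N)$. Since a $1$ can occur only in the first row of a semistandard tableau, I always have $\rho_1(T)\le\mu_1$, and the only way to realise the weight $\rho(T)=(|\mu|,0,\dots,0)$ is to fill every cell with a $1$, which is semistandard exactly when $\mu$ has a single row. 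This is precisely the reason the behaviour forks according to $\ell(\mu)$.

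First I would treat $\ell(\mu)>1$ and show that $E(n)$ is empty for large $n$, so the coefficient stabilises to zero. Summing the first column-sum condition of $\mathcal{MP}_{\mu,N}$ against $\rho_1(T)\le\mu_1$, and using the row-sum conditions to record the total mass $\sum_{i,T}m_{iT}=|\lambda|+n$, gives
\[
\nu_1 + |\mu|\cdot n \;=\; \sum_{i,T} m_{iT}\,\rho_1(T) \;\le\; \mu_1\bigl(|\lambda|+n\bigr).
\]
Rearranging yields $(|\mu|-\mu_1)\,n \le \mu_1|\lambda|-\nu_1$, and since $|\mu|-\mu_1=\mu_2+\mu_3+\cdots>0$ when $\ell(\mu)>1$, this fails once $n>(\mu_1|\lambda|-\nu_1)/(|\mu|-\mu_1)$; hence $E(n)=\emptyset$ and the coefficient vanishes there.

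Next I would treat $\ell(\mu)=1$, where $\mu=(\mu_1)$, $|\mu|=\mu_1$, and the target is $\nu+(\mu_1\cdot n)$. Let $T_0$ be the single-row tableau filled entirely with $1$'s; this is exactly the tableau $T_1$ of Theorem \ref{Q1} in this special case, with $\rho(T_0)=(\mu_1,0,\dots,0)$. I would define the injection $\varphi_n:E(n)\hookrightarrow E(n+1)$ sending $\mathcal{M}=(m_{iT})$ to the matrix with $m'_{1,T_0}=m_{1,T_0}+1$ and all other entries unchanged. This raises the first row sum from $\lambda_1+n$ to $\lambda_1+(n+1)$ and, because $\rho(T_0)$ is supported in position $1$, raises only the first column sum of $\mathcal{MP}_{\mu,N}$ by $\mu_1$, matching $\nu+(\mu_1 n)\mapsto\nu+(\mu_1(n+1))$. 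As in Theorem \ref{Q1}, $\varphi_n$ is surjective iff $m'_{1,T_0}>0$ for every $\mathcal{M}'\in E(n+1)$. Isolating the $T_0$-contribution in the first column-sum condition and using that every other tableau satisfies $\rho_1(T)\le\mu_1-1$, together with $\sum_{i,T}m'_{iT}=|\lambda|+(n+1)$ and $m'_{i,T_0}\le\lambda_i$ for $i>1$, I expect the estimate
\[
m'_{1,T_0} \;\ge\; \nu_1 + (n+1) - \mu_1|\lambda| + \lambda_1,
\]
which is positive as soon as $n\ge\mu_1|\lambda|-\nu_1-\lambda_1$. This threshold is sharper than the one Theorem \ref{Q1} would produce for the same sequence, which explains why the one-part case is worth treating on its own.

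The main obstacle is the surjectivity estimate in this second case: it requires isolating the $T_0$-column mass from the first column-sum condition, applying the \emph{strict} gap $\rho_1(T)\le\mu_1-1$ for $T\ne T_0$ against the total-mass identity, and then descending from the column total $\sum_i m'_{i,T_0}$ to the single entry $m'_{1,T_0}$ via the row bounds $m'_{i,T_0}\le\lambda_i$ (so that $\sum_{i>1}m'_{i,T_0}\le|\overline{\lambda}|$). Each manipulation is routine, but the bookkeeping must be arranged carefully to land the explicit constant, so that the sequence is shown constant for $n\ge\mu_1|\lambda|-\nu_1-\lambda_1$.
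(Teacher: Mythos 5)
Your proposal is correct and follows essentially the same route as the paper's proof of Theorem \ref{P1}: the case split on $\ell(\mu)$, the emptiness argument from $\rho_1(T)\le\mu_1$ when $\ell(\mu)>1$, and the injection incrementing $m_{1,T_1}$ for the all-ones tableau together with the strict gap $\rho_1(T)\le\mu_1-1$ when $\ell(\mu)=1$. Your resulting threshold $n\ge\mu_1|\lambda|-\nu_1-\lambda_1=|\overline{\nu}|-\lambda_1$ agrees with the paper's $n\ge|\overline{\nu}|-\lambda_1-1$ up to an immaterial off-by-one, and it is indeed sharper than what Theorem \ref{Q1} yields for $\mu=(m)$, which is the point of the observation.
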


\begin{proof}
Let $N$ be an integer bigger than or equal to the lengths of $\lambda$ and $\nu$. By Proposition \ref{coeffs pleth},
\begin{eqnarray*}
b_{\lambda + (n), \mu}^{\nu +\left(|\mu | \cdot n\right)}= \text{Card } \left( Q(\lambda + (n), \mu, \nu +\left(|\mu |\cdot n\right), N) \right).
\end{eqnarray*}
Set $E(n) = Q(\lambda + (n), \mu, \nu +\left(|\mu |\cdot n\right), N)$.

If $\ell(\mu) > 1$, we need to prove that $E(n)$ is empty for $n$ big enough. \newline
The elements in $E(n)$ are matrices satisfying the row sum conditions and the column sum conditions. 
These conditions can be written as a system of equations. For example, the first column sum condition of $\mathcal{MP}_{\mu, N}$ says that
\begin{eqnarray}\label{FirstColumnCond}
\nu_1 +|\mu | \cdot n= \sum_{i,T} m_{i,T}\cdot \rho_{1}(T).
\end{eqnarray}
Then, $E(n)$ is the set of solutions of this system. 

We want to show that this system has no solution, when $n$ is sufficiently large. 

We observe that each tableau has, at most, as many boxeslabeled with one as boxes are in the first row:
\begin{eqnarray}\label{Estimation1}
 \rho_1(T) \leq \mu_1 .
\end{eqnarray}

So, using ($\ref{Estimation1}$) in the equation ($\ref{FirstColumnCond}$),
\begin{eqnarray*}
 \nu_1 +|\mu |\cdot n \leq \mu_1 \cdot \left(\sum_{i,T} m_{iT}\right) = \mu_1 \cdot \left( |\lambda | + n \right)  .
\end{eqnarray*}

We can conclude that when $n > \frac{\mu_1\cdot |\lambda | - \nu_1}{|\mu | - \mu_1}$, there is no solution for the system, and $E(n)$ is an empty set. \\
Then, $b_{\lambda + (n), \mu}^{\nu + (|\mu | \cdot n)} =0 $ when $n > \frac{\mu_1\cdot |\lambda | - \nu_1}{|\mu | - \mu_1}$.

If $\ell(\mu)=1$, $\mu$ has the following form: $\mu=(m)$, for some positive integer $m$.
We want to prove that there exists a bijection between $E(n)$ and $E(n+1)$, when $n$ is big enough. \\
Let $T_1$ be the tableau in $t(\mu, N)$ which is filled just with ones:
\begin{center}
 \begin{tikzpicture}
\draw (0,0) rectangle (0.5,0.5);
\node at (0.25,0.25) {1};
\draw (0.5,0) rectangle (1,0.5);
\node at (0.75,0.25) {1};
\draw  (1,0) rectangle (1.5,0.5);
\node at (1.25,0.25) {$\dots$};
\draw  (1.5,0) rectangle (2,0.5);
\node at (1.75,0.25) {1};
\draw  (2,0) rectangle (2.5,0.5);
\node at (2.25,0.25) {1};
\draw  (2.5,0) rectangle (3,0.5);
\node at (2.75,0.25) {1};
\end{tikzpicture}
\end{center}

We define the map as in Theorem \ref{Q1} 
\begin{eqnarray*}
\begin{array}{rccc}
\varphi_n : & E(n) & \longrightarrow & E(n+1) \\
 & \mathcal{M} = (m_{iT}) & \longmapsto & \mathcal{M}^\prime = (m_{iT}^\prime)
\end{array}
\end{eqnarray*}
where  $m_{1,T_1}^\prime = m_{1,T_1} + 1$ and all other coefficients of $\mathcal{M}^\prime$ are unchanged. 

Since $\varphi_n$ is well-defined and injective, we just need to prove that it is also surjective, i.e., we have to show that for all $\mathcal{M}^\prime \in E(n+1)$, $m_{1,T_1}^\prime > 0$. 

In this case, we do not need to consider the cumulative sums. 
It is enough to observe that 
\begin{eqnarray}\label{Estimation2}
\left\{ \begin{array}{lcl}
\rho_1(T_1) =m  &  \\[0.3em]
\rho_1 (T) \leq m-1 & \text{ if } & T \neq T_1.
\end{array} \right.
\end{eqnarray}

Consider the first column sum condition for $\mathcal{M}^\prime\mathcal{P}_{\mu, N}$
\begin{eqnarray*}
\nu_1 + (n+1)\cdot m &=& \sum_{i,T} m_{iT} \cdot \rho_1(T) .
\end{eqnarray*}
Isolating the case $T=T_1$ from the others and applying the estimates for $\rho_1(T)$ in ($\ref{Estimation2}$), we obtain that
\begin{eqnarray*}
\nu_1 + (n+1)\cdot m &\leq& (m-1) \cdot \sum_{i, T\neq T_1} m_{iT} + m \cdot \sum_i m_{iT_1} 
\end{eqnarray*}
Observe that if $i>1$, using the row sum conditions for $\mathcal{M}$, $m_{iT_1} \leq \lambda_i$. So the sum of all entries different from $m_{1T_1}$ in the first column of $\mathcal{M}$ is, at most, $|\overline{\lambda}|$. 
Recalling that $\sum_{i,T} m_{iT} = |\lambda | + (n+1)$, we obtain the following inequality
\begin{eqnarray*}
\nu_1 + (n+1)\cdot m &\leq& (m-1)\cdot (|\lambda | + (n+1)) + m_{1T_1} + |\overline{\lambda} |
\end{eqnarray*}
that is simplified as
\begin{eqnarray*}
\nu_1 &\leq& m \cdot |\lambda | - (n+1) + m_{1T_1} - \lambda_1 .
\end{eqnarray*}
This means that $m_{1,T_1} > 0$ as soon as $n\geq |\overline{\nu}|- \lambda_1 -1$. 

\end{proof}

\begin{observation}
It follows from Theorem \ref{P1} that the sequence 
\begin{eqnarray*}
a_{\lambda + (n), \mu}^{\nu +\left(|\mu |\cdot n\right)}= \left\langle s_{\lambda + (n)} [s_\mu], s_{\nu + \left(|\mu |\cdot n\right)} \right\rangle
\end{eqnarray*}
stabilizes and it has limit zero when $\ell(\mu) >1$.
\end{observation}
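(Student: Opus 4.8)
The plan is to deduce the statement for the Schur-plethysm coefficients from the already-established Theorem~\ref{P1} for the $h$-plethysm coefficients, using the alternating decomposition of Lemma~\ref{a in b}. Write $\lambda(n)=\lambda+(n)$ and $\nu(n)=\nu+(|\mu|\cdot n)$, and set $N=\ell(\lambda)$, $N'=\ell(\nu)$. Because adding a multiple of the first coordinate to a sequence leaves its length unchanged, $\ell(\lambda(n))=N$ and $\ell(\nu(n))=N'$ for every $n$; hence $N$ and $N'$ may be fixed once and for all, independently of $n$. Applying Lemma~\ref{a in b} then gives
\begin{eqnarray*}
a_{\lambda(n),\mu}^{\nu(n)}=\sum_{\sigma\in\mathfrak{S}_N}\sum_{\tau\in\mathfrak{S}_{N'}}\varepsilon(\sigma)\,\varepsilon(\tau)\,b_{\lambda(n)+\omega(\sigma),\,\mu}^{\,\nu(n)+\omega(\tau)},
\end{eqnarray*}
a \emph{finite} signed sum whose index set $\mathfrak{S}_N\times\mathfrak{S}_{N'}$ does not depend on $n$.

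The key reindexing step is to recognize each summand as an instance of Theorem~\ref{P1}. Since $(n)=(n,0,\dots,0)$ acts only on the first coordinate and vector addition is commutative, for each pair $(\sigma,\tau)$ we have
\begin{eqnarray*}
\lambda(n)+\omega(\sigma)=\bigl(\lambda+\omega(\sigma)\bigr)+(n),\qquad \nu(n)+\omega(\tau)=\bigl(\nu+\omega(\tau)\bigr)+\bigl(|\mu|\cdot n\bigr).
\end{eqnarray*}
Thus the sequence $n\mapsto b_{\lambda(n)+\omega(\sigma),\mu}^{\nu(n)+\omega(\tau)}$ is exactly the sequence treated in Theorem~\ref{P1}, with the finite integer sequences $\lambda$ and $\nu$ there replaced by $\lambda+\omega(\sigma)$ and $\nu+\omega(\tau)$. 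Theorem~\ref{P1} therefore applies termwise: each such sequence stabilizes, and each has limit zero as soon as $\ell(\mu)>1$.

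Finally I would assemble the conclusion. A finite signed sum of sequences that are eventually constant is itself eventually constant, so $a_{\lambda(n),\mu}^{\nu(n)}$ stabilizes; and a finite signed sum of sequences with limit zero has limit zero, so when $\ell(\mu)>1$ the limit of $a_{\lambda(n),\mu}^{\nu(n)}$ is zero. The point requiring the most care is the bookkeeping around those pairs $(\sigma,\tau)$ for which $\lambda+\omega(\sigma)$ or $\nu+\omega(\tau)$ acquires a negative part: in that case $h_{\lambda+\omega(\sigma)}$ or $h_{\nu+\omega(\tau)}$ vanishes, since $h_r=0$ for $r<0$, so the corresponding $b$-coefficient is identically zero in $n$ and contributes a harmless constant-zero summand. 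Confirming that the uniform choice of $N,N'$ genuinely freezes the index set, and that the hypotheses of Theorem~\ref{P1} are satisfied on every index (the degenerate ones trivially), is the only real obstacle; once granted, the finiteness of the index set makes the passage from termwise stability to stability of the whole sum immediate.
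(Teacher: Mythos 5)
Your proof is correct and takes essentially the same route as the paper: the observation is deduced from Theorem~\ref{P1} by applying the signed decomposition of Lemma~\ref{a in b} termwise over the fixed finite index set $\mathfrak{S}_N\times\mathfrak{S}_{N'}$, exactly as the paper indicates. Your extra bookkeeping for pairs $(\sigma,\tau)$ producing a negative part (where $h_r=0$ for $r<0$ makes that summand identically zero in $n$) correctly fills in a detail the paper leaves implicit.
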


\subsection{Stability properties for the inner partition}
In this case, the set of tableaux of shape $\mu(n)$ changes when $n$ grows. 
\begin{thm}\label{R1}
Let $\mu$ be a partition and $\lambda$ and $\nu$ be finite sequences of integers. The following \emph{$h$-plethysm coefficients}
\begin{eqnarray*}
b_{\lambda , \mu + (n)}^{\nu +\left(|\lambda |\cdot n\right)}= \left\langle h_{\lambda} [s_{\mu + (n)}], h_{\nu + \left(|\lambda |\cdot n\right)} \right\rangle
\end{eqnarray*}
stabilize for $n$ big enough.
\end{thm}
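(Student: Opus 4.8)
The plan is to follow the same scheme as in the proofs of Theorems \ref{Q1} and \ref{P1}: reduce to the combinatorial model of Proposition \ref{coeffs pleth}, build an injection from the $n$-th model into the $(n+1)$-st, and show that it becomes surjective once $n$ is large. The genuinely new feature, and what I expect to be the main obstacle, is that the inner partition now grows, so the indexing set of tableaux $t(\mu+(n),N)$ changes with $n$. The injection can therefore no longer act by bumping a single matrix entry (as in Theorem \ref{Q1}); it must be induced by a map on the column indices themselves.

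First I would fix $N$ at least the lengths of $\lambda$ and $\nu$ and set $E(n)=Q(\lambda,\mu+(n),\nu+(|\lambda|\cdot n),N)$, so that by Proposition \ref{coeffs pleth} the coefficient is $\operatorname{Card}(E(n))$. The key combinatorial observation is that in any SSYT the letter $1$ occurs only in the first row, so inserting one extra box filled with a $1$ at the left end of the first row defines a map $\psi\colon t(\mu+(n),N)\to t(\mu+(n+1),N)$. I would verify that $\psi$ always yields a legal SSYT (weak increase along the first row is preserved since the inserted entry is minimal, and strictness down the columns survives because the old first-row entries only move to larger columns), that $\psi$ is injective, and that it changes the weight by exactly one more $1$, namely $\rho(\psi(T))=\rho(T)+e_1$.

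I would then promote $\psi$ to $\varphi_n\colon E(n)\to E(n+1)$, sending $\mathcal{M}=(m_{iT})$ to the matrix whose $\psi(T)$-column equals the $T$-column of $\mathcal{M}$ and whose remaining columns are zero. Since this moves entries column-by-column, the row sums stay $\lambda_i$; and since $\rho(\psi(T))=\rho(T)+e_1$ while the total mass is $\sum_{i,T}m_{iT}=|\lambda|$, the first column sum of $\mathcal{M}\mathcal{P}$ increases by exactly $|\lambda|$, which is precisely the passage from $\nu_1+|\lambda|\cdot n$ to $\nu_1+|\lambda|\cdot(n+1)$, with all other column sums unchanged. So $\varphi_n$ is a well-defined injection, and only surjectivity for large $n$ remains.

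The heart of the argument is this surjectivity. I would measure each column tableau by its \emph{deficiency}, the number of first-row cells not equal to $1$, i.e. the first-row length minus $\rho_1(T)$. Combining the first column sum condition with the total-mass identity gives $\sum_{i,T}m_{iT}\cdot\mathrm{def}(T)=|\lambda|\cdot\mu_1-\nu_1=:D$, a constant independent of $n$ and the same for $E(n)$ and $E(n+1)$ (if $D<0$ both sets are empty and the claim is trivial). Hence every tableau in the support of any $\mathcal{M}'\in E(n+1)$ has deficiency at most $D$. The final point is that once $n$ exceeds roughly $\mu_2-\mu_1+D$, a tableau of deficiency $d\le D$ has more than $\mu_2$ leading $1$'s, so its first row is a block of $1$'s followed by a fixed weakly increasing tail of $d$ entries from $\{2,\dots,N\}$ lying in columns beyond $\mu_2$, with the lower rows forming an SSYT of shape $(\mu_2,\mu_3,\dots)$ that no longer depends on $n$; deleting one leading $1$ then produces a legal tableau of $t(\mu+(n),N)$ whose image under $\psi$ is the given one. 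Thus every $\mathcal{M}'\in E(n+1)$ is supported on the image of $\psi$ and lies in the image of $\varphi_n$, so $\varphi_n$ is a bijection for $n$ large, $\operatorname{Card}(E(n))$ is eventually constant, and the coefficients $b_{\lambda,\mu+(n)}^{\nu+(|\lambda|\cdot n)}$ stabilize.
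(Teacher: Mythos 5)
Your proposal is correct and follows essentially the same route as the paper's proof: the same injection $\varphi_n$ induced by prepending a box labelled $1$ to the first row of each tableau (padding with null columns for the new tableaux), with surjectivity deduced from the first column sum condition together with the observation that a tableau of shape $\mu+(n+1)$ having more than $\mu_2$ ones admits a pre-image. The only cosmetic difference is the bookkeeping in the last step: the paper bounds the total mass $S'$ carried by the new tableaux by $(\mu_1\cdot|\lambda|-\nu_1)/(\mu_1+n+1-\mu_2)$, whereas you bound the deficiency of each tableau in the support by the constant $|\lambda|\cdot\mu_1-\nu_1$; both yield the same threshold $n\geq \mu_1\cdot(|\lambda|-1)+\mu_2-\nu_1$.
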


\begin{observation}
For each $T$, let $M_T$ be the column of $\mathcal{M}$ associated to the tableau $T$ and denote by $|M_T|$ the sum of the entries of $M_T$. Then, the condition $\sum_{i,T} m_{iT} = |\lambda |$ becomes $\sum_{T} |M_T | = |\lambda |$ and each column sum condition for $\mathcal{MP}_{\mu ,N}$ is written as $\sum_i |M_T| \cdot \rho_j(T) = \nu_j$.
\end{observation}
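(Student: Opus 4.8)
The plan is to verify both assertions by a routine interchange of the order of summation in the two families of linear conditions defining $Q(\lambda,\mu,\nu,N)$ in Proposition \ref{coeffs pleth}, using only the elementary fact that the tableau weights $\rho_j(T)$ depend on the column label $T$ but not on the row index $i$.

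First I would handle the total-mass identity. Summing the row sum conditions of $\mathcal{M}$ over all rows $i$ between $1$ and $N$ gives $\sum_{i,T} m_{iT} = \sum_i \lambda_i = |\lambda|$. Regrouping the same double sum by columns, $\sum_{i,T} m_{iT} = \sum_T \bigl(\sum_i m_{iT}\bigr) = \sum_T |M_T|$, where by definition $|M_T| = \sum_i m_{iT}$ is the sum of the entries of the column $M_T$ of $\mathcal{M}$ indexed by $T$. Combining the two expressions yields $\sum_T |M_T| = |\lambda|$, which is the first claim.

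Next I would rewrite the column sum conditions of $\mathcal{MP}_{\mu,N}$. By the definition of the matrix product and of $\mathcal{P}_{\mu,N} = (\rho_j(T))_{T,j}$, the $j$-th column sum of $\mathcal{MP}_{\mu,N}$ is $\sum_{i,T} m_{iT}\,\rho_j(T)$, and this equals $\nu_j$. The single point worth isolating is that, for each fixed tableau $T$, the factor $\rho_j(T)$ is independent of the summation index $i$, so it can be pulled out of the inner sum over $i$. Grouping by $T$ and factoring gives $\sum_{i,T} m_{iT}\,\rho_j(T) = \sum_T \rho_j(T)\bigl(\sum_i m_{iT}\bigr) = \sum_T |M_T|\,\rho_j(T)$, so the column sum condition becomes $\sum_T |M_T|\,\rho_j(T) = \nu_j$ (the summation index in the statement being $T$, the label of the columns $M_T$).

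There is no genuine obstacle here: the entire content is that both conditions depend on the entries $m_{iT}$ only through their column sums $|M_T|$, since neither the constant $|\lambda|$ nor the weights $\rho_j(T)$ see the row index $i$. The reason to record the observation is practical rather than difficult: it reduces the defining data of an element of $Q(\lambda,\mu,\nu,N)$ from the full array $(m_{iT})$ to the aggregated column sums $(|M_T|)_{T}$ whenever the individual row sum conditions are not separately needed, which is precisely the simplification the subsequent stability argument for the inner partition will exploit.
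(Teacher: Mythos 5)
Your proposal is correct and follows essentially the same route as the paper, which treats this observation as an immediate regrouping of the double sum $\sum_{i,T} m_{iT}$ by columns (summing the row sum conditions to get $|\lambda|$, and pulling the $i$-independent factor $\rho_j(T)$ out of the inner sum in the column sum conditions). Your side remark is also right: the index in the paper's displayed condition $\sum_i |M_T|\cdot\rho_j(T) = \nu_j$ is a typo and should read $\sum_T |M_T|\cdot\rho_j(T) = \nu_j$.
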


\begin{proof}
Let $N$ be an integer bigger than or equal to the lengths of $\lambda$ and $\nu$. By Proposition \ref{coeffs pleth},
\begin{eqnarray*}
b_{\lambda , \mu + (n)}^{\nu +\left(|\lambda |\cdot n\right)}= \text{Card } \left( Q(\lambda , \mu + (n), \nu +\left(|\lambda |\cdot n\right), N) \right).
\end{eqnarray*}
Set $E(n) = Q(\lambda , \mu + (n), \nu +\left(|\lambda |\cdot n\right), N)$.

Since the set of tableaux changes in each step, first of all we define the following map
\begin{eqnarray*}
\begin{array}{rccc}
\tau_n : & t(\mu + (n), N) & \longrightarrow & t(\mu + (n+1), N) 
\end{array}
\end{eqnarray*}
where $\tau_n(T)$ is obtained from $T$ by adding one box labelled by one in the first row and pushing the original first row of $T$ to the right. For instance,
\begin{center}
 \begin{tikzpicture}
\draw (0,0) rectangle (0.5,0.5);
\node at (0.25,0.25) {1};
\draw (0.5,0) rectangle (1,0.5);
\node at (0.75,0.25) {2};
\draw (1,0) rectangle (1.5,0.5);
\node at (1.25,0.25) {2};
\draw (0,0.5) rectangle (0.5,1);
\node at (0.25,0.75) {2};
\draw (0.5,0.5) rectangle (1,1);
\node at (0.75,0.75) {3};
\draw (1,0.5) rectangle (1.5,1);
\node at (1.25,0.75) {3};
\draw (0,1) rectangle (0.5,1.5);
\node at (0.25,1.25) {3};
\draw (0.5,1) rectangle (1,1.5);
\node at (0.75,1.25) {5};
\draw (0,1.5) rectangle (0.5,2);
\node at (0.25,1.75) {5};
\draw (0,2) rectangle (0.5,2.5);
\node at (0.25,2.25) {7};

\draw[|->] (1.75,1.25) -- (3.25,1.25);

\draw[fill=gray!30] (3.5,0) rectangle (4,0.5);
\node at (3.75,0.25) {\color{red} {1}};
\draw (4,0) rectangle (4.5,0.5);
\node at (4.25,0.25) {1};
\draw (4.5,0) rectangle (5,0.5);
\node at (4.75,0.25) {2};
\draw (5,0) rectangle (5.5,0.5);
\node at (5.25,0.25) {2};
\draw (3.5,0.5) rectangle (4,1);
\node at (3.75,0.75) {2};
\draw (4,0.5) rectangle (4.5,1);
\node at (4.25,0.75) {3};
\draw (4.5,0.5) rectangle (5,1);
\node at (4.75,0.75) {3};
\draw (3.5,1) rectangle (4,1.5);
\node at (3.75,1.25) {3};
\draw (4,1) rectangle (4.5,1.5);
\node at (4.25,1.25) {5};
\draw (3.5,1.5) rectangle (4,2);
\node at (3.75,1.75) {5};
\draw (3.5,2) rectangle (4,2.5);
\node at (3.75,2.25) {7};
 \end{tikzpicture}
\end{center}

The map $\tau_n$ is injective but not surjective. This allows us to separate the set $t(\mu + (n+1), N)$ into two sets: 
the set of \emph{tableaux with pre-image}, whose tableaux are denoted by $\overline{T}$, and the set of \emph{new tableaux}, whose tableaux are denoted by $T^\prime$. 

In order to define $\varphi_n : E(n) \longrightarrow E(n+1)$, we order the columns of the matrices. The first columns correspond to the tableaux with pre-image and the next columns correspond to the new tableaux. 
So, the matrices have the following form: $\mathcal{M} = \left( M_{\overline{T}} \left| \right. M_{T^\prime} \right) $.\\
Then, we define $\varphi_n (\mathcal{M})= \left(\mathcal{M} \left| \right. \overline{0} \right)$. 
This means that we turn the matrix of $E(n)$, $\mathcal{M}$, into a matrix of $E(n+1)$ by adding as many null columns as we need. In fact, we add in total the same number of columns as new tableaux there are in $t(\mu + (n+1), N)$. 

The map $\varphi_n$ is well defined and injective. We check surjectivity.

Let $\mathcal{M}^\prime \in E(n+1)$. Let denote $S=\sum_{\overline{T}} |M^\prime_{\overline{T}}|$ and $S^\prime = \sum_{T^\prime} |M^\prime_{T^\prime}|$. 
Then, $S+S^\prime = |\lambda |$ and we need to show that $S^\prime =0$. From this it will follow that, for all new tableau $T^\prime$, $M_{T^\prime}^\prime =\overline{0}$ and that $\mathcal{M}^\prime$ is of the form $(\mathcal{M} \left| \right. \overline{0})$, with $\mathcal{M} \in E(n)$. 

We can restate the first column sum condition for $\mathcal{M}^\prime \mathcal{P}_{\mu,N}$ as the following condition
\begin{eqnarray}\label{eq1}
\nu_1 + (n+1) \cdot |\lambda | = \sum_{\overline{T}} |M_{\overline{T}}| \cdot \rho_1(\overline{T}) + \sum_{T^\prime} |M_{T^\prime}| \cdot \rho_1(T^\prime)
\end{eqnarray}
We estimate the number of ones in each case
\begin{eqnarray}\label{Estimation3}
\left\{ \begin{array}{lcl}
\rho_1(T) \leq \mu_1+n+1 & \text{ if } & T= \overline{T} \\[0.3em]
\rho_1(T) \leq \mu_2 & \text{ if } & T = T^\prime .
\end{array} \right.
\end{eqnarray}
For tableaux with pre-image, we can have as many ones as boxes are in the first row. For new tableaux, we cannot have more ones than the length of the second row; otherwise, there would exist the pre-image of the tableau. 

Using these estimates ($\ref{Estimation3}$) in ($\ref{eq1}$), we get that
\begin{eqnarray*}
\nu_1 + (n+1)\cdot |\lambda | &\leq&  (\mu_1 + n )\cdot S + \mu_2 \cdot S^\prime .
\end{eqnarray*}
Recalling that $S^\prime = |\lambda | -S$ and reorganizing the inequality, we obtain that
\begin{eqnarray*} 
S^\prime \leq \frac{\mu_1 \cdot |\lambda | - \nu_1}{\mu_1 + n +1-\mu_2} 
\end{eqnarray*}
Since $S^\prime \geq 0$, as soon as $n\geq  \mu_1 \cdot \left(|\lambda |-1\right) + \mu_2 - \nu_1$, $S^\prime$ will be zero.

\end{proof}

\begin{observation} From Theorem \ref{R1}, we obtain the corresponding property for the $a_{\lambda(n), \mu(n)}^{\nu(n)}$ coefficients using $\eqref{a in b}$.
\end{observation}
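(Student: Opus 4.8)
The plan is to deduce the stability of the sequence $a_{\lambda, \mu + (n)}^{\nu + (|\lambda|\cdot n)}$ directly from the stability of the $h$-plethysm coefficients established in Theorem \ref{R1}, feeding it through the alternating decomposition of Lemma \ref{a in b}. First I would fix integers $N \geq \ell(\lambda)$ and $N'$ at least the length of $\nu + (|\lambda|\cdot n)$. Since adding $(|\lambda|\cdot n)$ only inflates the first part of $\nu$, this length equals $\max(\ell(\nu),1)$ and does not grow with $n$, so $N$ and $N'$ can be chosen once and for all. Expanding only the outer Schur function $s_\lambda$ and the target $s_{\nu + (|\lambda|\cdot n)}$ by Jacobi--Trudi, while leaving the inner argument $s_{\mu+(n)}$ untouched (plethysm is not linear in the inner slot), Lemma \ref{a in b} gives
\begin{eqnarray*}
a_{\lambda, \mu + (n)}^{\nu + (|\lambda|\cdot n)} = \sum_{\sigma \in \mathfrak{S}_{N},\ \tau \in \mathfrak{S}_{N'}} \varepsilon(\sigma)\,\varepsilon(\tau)\, b_{\lambda + \omega(\sigma),\ \mu+(n)}^{(\nu + \omega(\tau)) + (|\lambda|\cdot n)}.
\end{eqnarray*}
The decisive point is that both the index set $\mathfrak{S}_{N} \times \mathfrak{S}_{N'}$ and the signs $\varepsilon(\sigma)\varepsilon(\tau)$ are independent of $n$; thus $a_{\lambda, \mu + (n)}^{\nu + (|\lambda|\cdot n)}$ is a fixed finite integer-linear combination of $h$-plethysm coefficients, with a number of summands bounded uniformly in $n$.

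Next I would verify that each summand is exactly of the shape treated in Theorem \ref{R1}. Since $\omega(\sigma)_j = \sigma(j) - j$, we have $\sum_j \omega(\sigma)_j = 0$, so $|\lambda + \omega(\sigma)| = |\lambda|$ for every $\sigma$, even when $\lambda + \omega(\sigma)$ has negative entries. Hence the target of the $(\sigma,\tau)$-term can be rewritten as $(\nu + \omega(\tau)) + \bigl(|\lambda + \omega(\sigma)|\cdot n\bigr)$, which is precisely the $n$-dependent family of Theorem \ref{R1} for the fixed inner partition $\mu$, the fixed outer sequence $\lambda + \omega(\sigma)$, and the fixed target sequence $\nu + \omega(\tau)$. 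Consequently each such $b$-sequence stabilizes in $n$. For the permutations $\sigma$ for which $\lambda + \omega(\sigma)$ acquires a negative entry, the corresponding coefficient vanishes identically because $h_r = 0$ for $r<0$; these terms are constant (equal to zero) and cause no difficulty.

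Finally, a finite linear combination with fixed integer coefficients of eventually constant sequences is itself eventually constant: each of the finitely many $b$-sequences is constant beyond its own threshold (from Theorem \ref{R1}, these thresholds involve only $\mu$, $|\lambda|$ and the bounded quantity $\nu_1 + \omega(\tau)_1$), so $a_{\lambda, \mu + (n)}^{\nu + (|\lambda|\cdot n)}$ is constant once $n$ exceeds the maximum of these finitely many thresholds. I regard the only delicate bookkeeping — and hence the main, though mild, obstacle — as the verification that the one-row target shift $(|\lambda|\cdot n)$ coincides with the shift $(|\lambda + \omega(\sigma)|\cdot n)$ required by Theorem \ref{R1} uniformly across all $\sigma$; this is exactly the identity $|\omega(\sigma)| = 0$, together with the observation that the length of $\nu + (|\lambda|\cdot n)$, and therefore the number of summands, remains bounded independently of $n$.
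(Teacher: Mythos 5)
Your proposal is correct and takes essentially the same route as the paper: expand $a_{\lambda,\,\mu+(n)}^{\nu+(|\lambda|\cdot n)}$ via Lemma \ref{a in b} into a finite alternating sum of $h$-plethysm coefficients whose index set and signs do not depend on $n$, and use the identity $|\lambda+\omega(\sigma)|=|\lambda|$ to see that every summand $b_{\lambda+\omega(\sigma),\,\mu+(n)}^{(\nu+\omega(\tau))+(|\lambda|\cdot n)}$ is exactly of the form covered by Theorem \ref{R1}, so the finite sum is eventually constant. This is precisely the mechanism the paper invokes (and records again, with the same key facts $|\lambda+\omega(\sigma)|=|\lambda|$ and $\min_\tau\omega_1(\tau)=0$, in the discussion preceding Corollary \ref{MyBounds}).
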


Once we have proved $(R1)$, we proceed with $(R2)$. 
\begin{thm}\label{R2}
Let $\mu$ and $\pi$ be partitions. Let $\lambda$ and $\nu$ be positive integer sequences and $n\in \mathbb{N}$. Then, the following sequence stabilizes 
\begin{eqnarray*}
b_{\lambda, \mu+n \cdot\pi}^{\nu + n\cdot |\lambda | \cdot \pi} = \left\langle h_\lambda[s_{\mu+n \cdot\pi}], h_{\nu + n\cdot |\lambda | \cdot \pi}\right\rangle
\end{eqnarray*}
when $n$ is big enough.
\end{thm}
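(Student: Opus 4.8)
The plan is to follow the same scheme as in the proof of Theorem \ref{R1}, replacing the single box added to the first row by the staircase dictated by $\pi$. By Proposition \ref{coeffs pleth} set $E(n) = Q(\lambda, \mu + n\pi, \nu + n|\lambda|\pi, N)$ for $N$ large, so that $b_{\lambda, \mu + n\pi}^{\nu + n|\lambda|\pi} = \operatorname{Card}(E(n))$. First I would define $\tau_n : t(\mu + n\pi, N) \to t(\mu + (n+1)\pi, N)$ by prepending, for each $i \le \ell(\pi)$, exactly $\pi_i$ boxes labelled $i$ to the $i$-th row of $T$ and pushing the old $i$-th row to the right. A short case check shows $\tau_n(T)$ is again semistandard (columns stay strict precisely because $\pi_{i-1} \geq \pi_i$), that $\tau_n$ is injective, and that $\rho(\tau_n(T)) = \rho(T) + \pi$. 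As in Theorem \ref{R1} this splits $t(\mu + (n+1)\pi, N)$ into tableaux with pre-image $\overline{T}$ and new tableaux $T'$, and the map $\varphi_n : E(n) \to E(n+1)$, $\mathcal{M} \mapsto (\mathcal{M} \mid \overline{0})$, appending null columns for the new tableaux, is well defined (the weight shift by $\pi$ is exactly what turns the column sums $\nu + n|\lambda|\pi$ into $\nu + (n+1)|\lambda|\pi$) and injective. Surjectivity for large $n$ then amounts to showing that for every $\mathcal{M}' \in E(n+1)$ the total mass $S' = \sum_{T'} |M'_{T'}|$ carried by the new columns is $0$.

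Here the argument of Theorem \ref{R1} breaks down, since inspecting one column sum no longer suffices: a new tableau may still have the maximal number of $1$'s (for $\pi = (1,1)$, the tableau with first row all $1$'s and second row all $3$'s is new). Instead I would pass to cumulative sums, exactly as in the proof of Theorem \ref{Q1}. Using all column sum conditions at once gives $\sum_{T''} |M'_{T''}| \, \|\rho(T'')\| = \|\nu\| + (n+1)|\lambda|\,\|\pi\|$. Since the weight of a semistandard tableau is dominated by its shape (the fact exploited in Theorem \ref{Q1}), $\|\rho(T'')\| \le \|\mu + (n+1)\pi\| = \|\mu\| + (n+1)\|\pi\|$, so the deficit $\delta(T'') = \|\mu+(n+1)\pi\| - \|\rho(T'')\| \geq 0$ and, crucially,
\[
\sum_{T''} |M'_{T''}| \, \delta(T'') = |\lambda|\,\|\mu\| - \|\nu\|,
\]
a constant independent of $n$. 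One checks that $\delta(T'')$ equals the number of boxes lying in the first $i$ rows with entry exceeding $i$, summed over all $i$.

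The main obstacle is the key lemma that the new tableaux have deficit tending to infinity, i.e. $g(n) := \min\{\delta(T') : T' \text{ new of shape } \mu + (n+1)\pi\} \to \infty$. I would prove the contrapositive: fix $D$ and suppose $\delta(T'') \le D$. Then each cumulative deficit is $\le D$, so every row $i \le \ell(\pi)$ contains at most $D$ entries exceeding $i$; hence for $n$ large its first $\mu_i + (n+1)\pi_i - D \ge \pi_i$ cells are all equal to $i$, and one may strip the leading $\pi_i$ copies of $i$ from row $i$. The delicate point is checking that this stripping returns a semistandard tableau. When $\pi_{i-1} = \pi_i$ the two rows shift equally and strictness is inherited from $T''$; when $\pi_{i-1} > \pi_i$ a direct index count shows that for $n \geq n_0(D)$ every compared cell of row $i-1$ still lies in its pure block of $(i-1)$'s, so strictness again holds. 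Thus a bounded-deficit tableau lies in $\operatorname{Im}(\tau_n)$ for $n$ large, which is the lemma.

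Finally, combining the identity with the lemma gives $g(n)\,S' \le \sum_{T'} |M'_{T'}|\,\delta(T') \le |\lambda|\,\|\mu\| - \|\nu\|$, whence $S' \le (|\lambda|\,\|\mu\| - \|\nu\|)/g(n) \to 0$; being a non-negative integer, $S' = 0$ for $n$ large. This makes $\varphi_n$ bijective, so $\operatorname{Card}(E(n))$ is eventually constant, giving the stated stability, and the relation \eqref{a in b} transfers the conclusion to the coefficients $a_{\lambda, \mu + n\pi}^{\nu + n|\lambda|\pi}$. I expect the semistandardness check after stripping, in the case where $\pi$ has strictly decreasing parts, to be the hard part; it is exactly there that the bounded-deficit hypothesis, forcing the tableau to be nearly maximal, is indispensable.
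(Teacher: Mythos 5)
Your proposal is correct and follows essentially the same scheme as the paper's proof: the same maps $\tau_n$ and $\varphi_n$, and surjectivity established by cumulative column-sum estimates showing that tableaux far from extremal carry no mass, together with an explicit un-prepending of nearly-extremal tableaux, including the same case split on $\pi_j = \pi_{j+1}$ versus $\pi_j > \pi_{j+1}$ and the same resulting bound on $n$. The only differences are presentational: the paper packages the level-$i$ estimates as separate constants $c_i$ (Lemma~\ref{AuxLem}) rather than your single aggregated deficit $\delta$, and verifies that the pre-image is semistandard via Gelfand--Tsetlin patterns rather than by directly stripping boxes from the rows.
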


\begin{proof}
Let $N$ be an integer bigger than or equal to the lengths of $\lambda$ and $\nu$. By Proposition \ref{coeffs pleth}, 
\begin{eqnarray*}
 b_{\lambda, \mu+n \cdot\pi}^{\nu + n\cdot |\lambda | \cdot \pi} = \text{Card }\left( Q(\lambda, \mu + n \cdot\pi, \nu +n\cdot |\lambda | \cdot \pi, N)\right) .
\end{eqnarray*}
Set $E(n)= Q(\lambda, \mu + n \cdot\pi, \nu +n\cdot |\lambda | \cdot \pi, N)$.

Before defining a map between $E(n)$ and $E(n+1)$, we define a map between the set of tableaux
\begin{eqnarray*}
\begin{array}{rccc}
\tau_n : &t(\mu +  n\cdot\pi, N) & \longrightarrow & t(\mu + (n+1)\cdot\pi, N) \\
 & T & \longrightarrow & \tau_n(T)
\end{array}
\end{eqnarray*}
where $\tau_n(T)$ is obtained from $T$ adding in the left side the SSYT of shape $\pi$, which has $\pi_i$ boxes filled with $i$'s in the $i-$th row, and pushing the original rows of $T$ to the right. 
For instance,
\begin{center}
 \begin{tikzpicture}
\draw (0,0) rectangle (0.5,0.5);
\node at (0.25,0.25) {1};
\draw (0.5,0) rectangle (1,0.5);
\node at (0.75,0.25) {2};
\draw (1,0) rectangle (1.5,0.5);
\node at (1.25,0.25) {2};
\draw (0,0.5) rectangle (0.5,1);
\node at (0.25,0.75) {2};
\draw (0.5,0.5) rectangle (1,1);
\node at (0.75,0.75) {3};
\draw (1,0.5) rectangle (1.5,1);
\node at (1.25,0.75) {3};
\draw (0,1) rectangle (0.5,1.5);
\node at (0.25,1.25) {3};
\draw (0.5,1) rectangle (1,1.5);
\node at (0.75,1.25) {5};
\draw (0,1.5) rectangle (0.5,2);
\node at (0.25,1.75) {5};
\draw (0,2) rectangle (0.5,2.5);
\node at (0.25,2.25) {7};

\draw[|->] (1.75,1.25) -- (3.25,1.25);

\draw[fill=gray!30] (3.5,0) rectangle (4,0.5);
\node at (3.75,0.25) {\color{red} {1}};
\draw[fill=gray!30] (4,0) rectangle (4.5,0.5);
\node at (4.25,0.25) {\color{red} {1}};
\draw[fill=gray!30] (4.5,0) rectangle (5,0.5);
\node at (4.75,0.25) {\color{red} {1}};
\draw (5,0) rectangle (5.5,0.5);
\node at (5.25,0.25) {1};
\draw (5.5,0) rectangle (6,0.5);
\node at (5.75,0.25) {2};
\draw (6,0) rectangle (6.5,0.5);
\node at (6.25,0.25) {2};
\draw[fill=gray!30] (3.5,0.5) rectangle (4,1);
\node at (3.75,0.75) {\color{red} {2}};
\draw[fill=gray!30] (4,0.5) rectangle (4.5,1);
\node at (4.25,0.75) {\color{red} {2}};
\draw (4.5,0.5) rectangle (5,1);
\node at (4.75,0.75) {2};
\draw (5,0.5) rectangle (5.5,1);
\node at (5.25,0.75) {3};
\draw (5.5,0.5) rectangle (6,1);
\node at (5.75, 0.75) {3};
\draw[fill=gray!30] (3.5,1) rectangle (4,1.5);
\node at (3.75,1.25) {\color{red} {3}};
\draw[fill=gray!30] (4,1) rectangle (4.5,1.5);
\node at (4.25,1.25) {\color{red} {3}};
\draw (4.5,1) rectangle (5,1.5);
\node at (4.75, 1.25) {3};
\draw (5,1) rectangle (5.5,1.5);
\node at (5.25,1.25) {5};
\draw (3.5,1.5) rectangle (4,2);
\node at (3.75,1.75) {5};
\draw (3.5,2) rectangle (4,2.5);
\node at (3.75,2.25) {7};
 \end{tikzpicture}
\end{center}

The map $\tau_n$ is injective but not surjective. So, we separate $t(\mu + (n+1)\cdot \pi, N)$ into two sets: the set of \textit{tableaux with pre-image}, 
whose elements we denote by $\overline{T}$, and the set of \textit{new tableaux}, whose elements we denoted by $T^\prime$.

We define a map between $E(n)$ and $E(n+1)$. First, we order the columns of the matrices: the first columns correspond to the tableaux with pre-image, $\overline{T}$,
and the next columns correspond to the new tableaux, $T^\prime$. So, the matrices have the following form: $\mathcal{M} = \left( M_{\overline{T}} \left| M_{T^\prime} \right. \right)$. Then,
\begin{eqnarray*}
\begin{array}{rccc}
\varphi_n: & E(n) & \longrightarrow & E(n+1) \\
 & \mathcal{M} & \longrightarrow & \mathcal{M}^\prime = \left( \mathcal{M} | \overline{0}\right)
\end{array}
\end{eqnarray*}
where we are adding to $\mathcal{M}$ a null column for each new tableaux in $t(\mu + (n+1)\cdot \pi ,N)$. 

The map $\varphi_n$ is well defined and  injective. We want to show that it is also surjective. 

In this case it is not as easy as in Theorem \ref{R1} to obtain estimates for the weights of the tableaux. We need to introduce another combinatorial object, the \textit{Gelfand-Tsetlin patterns}.
They are triangular arrays, $G$, of non-negative integers, say
\begin{eqnarray*}
 \begin{array}{ccccccccc}
  x_{n1} & & x_{n2} &  & \cdots &  & x_{n,n-1} & & x_{nn} \\
   & \ddots & & \ddots & & \ddots & & \udots & \\
   & & x_{31} & & x_{32} & & x_{33} & & \\
   & & & x_{21} & & x_{22} & & & \\
   & & & & x_{11} & & & & \\
 \end{array}
\end{eqnarray*}
such that $x_{i+1,j+1} \leq x_{i,j} \leq x_{i+1,j}$, for all $1\leq j \leq i \leq n$, when all three numbers are defined.
 
There exists a bijection between SSYT and Gelfand-Tsetlin patterns. Let $T$ be a SSYT of shape $\mu$ and weight $\beta$, the bijection is defined in the following way: $x_{ij}$ is the number of entries in the $j-$th row of $T$ that are $\leq i$.
So, the elements of the last row will be $x_{ki}= \mu_i$ and $\sum_{j} x_{ij} = \beta_1 + \dots + \beta_i$. 
For instance, 
\begin{center}
 \begin{tikzpicture}
\draw (0,0) rectangle (0.5,0.5);
\node at (0.25,0.25) {1};
\draw (0.5,0) rectangle (1,0.5);
\node at (0.75,0.25) {2};
\draw (1,0) rectangle (1.5,0.5);
\node at (1.25,0.25) {2};
\draw (1.5,0) rectangle (2,0.5);
\node at (1.75,0.25) {3};
\draw (0,0.5) rectangle (0.5,1);
\node at (0.25,0.75) {2};
\draw (0.5,0.5) rectangle (1,1);
\node at (0.75,0.75) {3};
\draw (1,0.5) rectangle (1.5,1);
\node at (1.25,0.75) {3};
\draw (1.5,0.5) rectangle (2,1);
\node at (1.75, 0.75) {4};
\draw (0,1) rectangle (0.5,1.5);
\node at (0.25,1.25) {3};
\draw (0.5,1) rectangle (1,1.5);
\node at (0.75,1.25) {5};
\draw (0,1.5) rectangle (0.5,2);
\node at (0.25,1.75) {5};
\draw (0.5,1.5) rectangle (1,2);
\node at (0.75,1.75) {6};

\draw[|->] (2.25,1) -- (3.75,1);

\node at (4,2.25) {4};
\node at (5,2.25) {4};
\node at (6,2.25) {2};
\node at (7,2.25) {2};
\node at (8,2.25) {0};
\node at (9,2.25) {0};

\node at (4.5,1.75) {4};
\node at (5.5, 1.75) {4};
\node at (6.5,1.75) {2};
\node at (7.5,1.75) {1};
\node at (8.5,1.75) {0};

\node at (5,1.25) {4};
\node at (6,1.25) {4};
\node at (7,1.25) {1};
\node at (8,1.25) {0};

\node at (5.5,0.75) {4};
\node at (6.5,0.75) {3};
\node at (7.5,0.75) {1};

\node at (6,0.25) {3};
\node at (7,0.25) {1};

\node at (6.5, -0.25) {1};
\end{tikzpicture}
\end{center}

These two conditions characterize the Gelfand-Tsetlin pattern. 
We denote by $GF(T)$ the Gelfand-Tsetlin pattern associated to $T$ and by $GF(\mu)$ the set of Gelfand-Tsetlin pattern associated to SSYT of shape $\mu$.

The map defined between the sets of tableaux, $\tau_n$, induces a map between the Gelfand-Tsetlin patterns in the following way:  
\begin{eqnarray*}
 \begin{array}{rccc}
  \tau^\prime_n: & GF(\mu + n\cdot \pi) & \longrightarrow & GF(\mu + (n+1)\cdot \pi) \\
  & GF(T)=(x_{ij}) & \longrightarrow & GF(\tau_n(T))= (x_{ij}+ \pi_j) = (y_{ij})
 \end{array}
\end{eqnarray*}

Before proving the surjectivity of $\varphi_n$, for $n$ big enough, we are going to prove the following lemma.
\begin{lem}\label{AuxLem}
For each $i$, there exists a constant $c_i$ such that if $T$ is a tableau of $ t(\mu + n\cdot \pi , N)$ for which the number of $i$'s in the $i-$th row is less than or equal to $\mu_i + n\cdot \pi_i - c_i$, for some $i$,  then $M_T= \overline{0}$.  

In fact, $c_i > \sum_{j=1}^i \left( |\lambda | \cdot \mu_j - \nu_j\right)  $
\end{lem}

\begin{proof}
Suppose that there exists $T_0$ such that for all $i$, the number of $i$'s in the $i-$th row is less than or equal to $\mu_i + n\pi_i - c_i$ and $M_{T_0} \neq \overline{0}$. 
We want to get a contradiction at some point. 

In general, for any SSYT of $t(\mu + n\cdot \pi, N)$, $T$, we have the following estimate 
\begin{eqnarray}\label{EqEst1}
 \rho_1(T) + \cdots + \rho_i(T) \leq (\mu_1 + n\cdot \pi_1) + \cdots + (\mu_i + n\cdot \pi_i)
\end{eqnarray}
but in case of $T_0$, we can refine this bound
\begin{eqnarray}\label{EqEst2}
 \rho_1(T_0) + \cdots + \rho_i(T_0) \leq (\mu_1 + n\cdot \pi_1) + \cdots + (\mu_i + n\cdot \pi_i) - c_i
\end{eqnarray}

The sum of the first $i$ column sum conditions for $\mathcal{MP}_{\mu + n\cdot \pi,N}$ can be restated as
\begin{eqnarray*}
\sum_{j=1}^i \left(\nu_j + n\cdot |\lambda |\cdot \pi_j\right)  = 
 \sum_{T\neq T_0} |M_T| \cdot \sum_{j=1}^i \rho_j(T) + |M_{T_0}| \cdot \sum_{j=1}^i \rho_j(T_0) .
 \end{eqnarray*}
And using the estimates ($\ref{EqEst1}$) and ($\ref{EqEst2}$), we obtain that 
\begin{multline*}
\sum_{j=1}^i \left(\nu_j + n\cdot |\lambda |\cdot \pi_j\right)  \leq \sum_{j=1}^i \left(\mu_j + n\cdot \pi_j\right)\cdot \left(|\lambda |- |M_{T_0}|\right) + \\ 
+ \sum_{j=1}^i \left(\mu_j + n\cdot \pi_j\right)\cdot |M_{T_0}| - |M_{T_0}| \cdot c_i .
\end{multline*}
Reorganizing last inequality, we get that
\begin{eqnarray*}
 |M_{T_0}| \cdot c_i \leq \sum_{j=1}^i \left(|\lambda | \cdot \mu_j - \nu_j\right) .
\end{eqnarray*}

So, it is enough to consider $c_i > \sum_{j=1}^i \left( |\lambda | \cdot \mu_j - \nu_j\right) $ in order to obtain a contradiction, as we wanted. 

\end{proof}

Finally, we are ready for checking the surjectivity of $\varphi_n$.

Let $\mathcal{M}^\prime\in E(n+1)$. We need to show that all the columns of $\mathcal{M}^\prime$ corresponding to new tableaux are null columns. Using Lemma \ref{AuxLem},
we need to show that the set of tableaux of shape $\mu + (n+1)\cdot \pi$ such that, for all $i$, the number of $i$'s in the $i-$th row is bigger than $\mu_i + (n+1)\cdot \pi_i - c_i$, 
for the constants of the Lemma, is a subset of the set of tableaux of shape $\mu + (n+1)\cdot \pi$ with pre-image. 

Let $T$ be a tableau of shape  $\mu + (n+1)\cdot \pi$  such that the number of $i$'s in the $i-$th row is bigger than $\mu_i + (n+1)\cdot \pi_i - c_i$ for all $i$. We denote this condition by ($\star$), for further references. 

This tableau $T$ has associated a Gelfand-Tsetlin pattern, $GF(T)=(y_{ij})$. We define the pre-image of $GF(T)$, $(\tau^\prime)^{-1} (GF(T))=(x_{ij})$ in $GF(\mu+ (n+1)\cdot \pi )$, by setting $x_{ij}=y_{ij} - \pi_j$. 
Then, we build the pre-image of $T$, $\varphi_n^{-1}(T)$, by considering the associated tableau to $(\tau^\prime)^{-1} (GF(T))$. 

We need to prove that this pre-image of $GF(T)$ is well-defined. There are three kinds of inequalities we should check:
\begin{itemize}
 \item To show that $x_{i+1,j} \geq x_{ij}$, we use directly the definition of the pre-image and that $(y_{ij})$ is a Gelfand-Tsetlin pattern. 

 \item We show that $x_{ij} \geq x_{i+1,j+1}$ \\
 If $\pi_j= \pi_{j+1}$, we have that
  \begin{eqnarray*}
  x_{ij} \geq   x_{i+1,j+1} 
 \end{eqnarray*}
directly from the fact that $(y_{ij})$ is a Gelfand-Tsetlin pattern. 
 
 If $\pi_j > \pi_{j+1}$, we need to show that $ y_{ij}- \pi_j \geq y_{i+1,j+1} - \pi_{j+1} $. We consider a lower bound for $y_{ij}$ and an upper bound for $y_{i+1,j+1}$.
 
 For $y_{ij}$, we know that it is the number of $\{1,\dots , i\}$ in the $j-$th row. So, at least, $y_{ij}$ is the number of $j$'s in the $j-$th row and, applying ($\star$), we obtain the following bound:
 \begin{eqnarray}\label{Bound1} 
  y_{ij} \geq \mu_j + (n+1)\cdot \pi_j - c_j .
 \end{eqnarray}
 For $y_{i+1,j+1}$, we will use the general upper bound saying that, at most, we will have as many numbers as boxes in the $(j+1)-$th row of $\mu + (n+1)\cdot \pi$:
 \begin{eqnarray}\label{Bound2}
  y_{i+1,j+1} \leq \mu_{j+1} + (n+1)\cdot \pi_{j+1}.
 \end{eqnarray}
 
 Putting together both bounds, ($\ref{Bound1}$) and ($\ref{Bound2}$), we obtain that
 \begin{eqnarray*}
x_{ij} \geq x_{i+1,j+1} \hspace{0.3cm} \text{ as soon as }  \hspace{0.3cm} n \geq \frac{\mu_{j+1} - \mu_j +c_j}{\pi_j - \pi_{j+1}} .
 \end{eqnarray*}

 \item $x_{ij} \geq 0$ \\
 Due to the other inequalities, it is enough to check it for the elements $x_{ii}=y_{ii}-\pi_i$. In the $i-$th row, there is no numbers from $\{1,.., i-1\}$, so $y_{ii}$ is exactly the number of $i$'s in the $i-$th row and 
 this means that $y_{ii} \geq \mu_i + (n+1)\cdot \pi_i - c_i$. So, we have that
 \begin{eqnarray*}
  x_{ii}  \geq 0 \hspace{0.3cm} \text{ as soon as }  \hspace{0.3cm} n \geq \frac{c_i - \mu_i}{\pi_i} .
 \end{eqnarray*}

\end{itemize}

\end{proof}

\begin{observation}
 As before, the stability property corresponding to the plethysm coefficients $a_{\lambda, \mu +n\cdot \pi}^{\nu + n\cdot |\lambda | \cdot \pi}$ is a consequence of Theorem \ref{R2}. 
\end{observation}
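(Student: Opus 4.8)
The plan is to reproduce, for a general partition $\pi$, the injection-then-surjection scheme already used for Theorem~\ref{R1} (which is the case $\pi=(1)$). Fixing $N$ at least the lengths of $\lambda$ and $\nu$, Proposition~\ref{coeffs pleth} identifies $b_{\lambda,\mu+n\cdot\pi}^{\nu+n\cdot|\lambda|\cdot\pi}$ with the cardinality of $E(n)=Q(\lambda,\mu+n\cdot\pi,\nu+n\cdot|\lambda|\cdot\pi,N)$. First I would define a tableau map $\tau_n\colon t(\mu+n\cdot\pi,N)\to t(\mu+(n+1)\cdot\pi,N)$ by attaching, on the left of a tableau $T$, the canonical SSYT of shape $\pi$ (row $i$ filled with $i$'s) and shifting the rows of $T$ to the right. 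As for $\pi=(1)$, this $\tau_n$ is injective but not surjective, so it partitions $t(\mu+(n+1)\cdot\pi,N)$ into tableaux with a pre-image $\overline{T}$ and new tableaux $T'$. Ordering columns with the $\overline{T}$ first, I would define $\varphi_n(\mathcal{M})=(\mathcal{M}\mid\overline{0})$, padding by one zero column per new tableau; it is clearly well defined and injective, so everything reduces to showing that $\varphi_n$ is surjective for $n$ large, i.e. that in every $\mathcal{M}'\in E(n+1)$ the columns indexed by new tableaux are zero.

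The surjectivity step is the crux, and here the generic $\pi$ is genuinely harder than $\pi=(1)$, where a single estimate $\rho_1\le\mu_1$ sufficed. The device I would introduce is an auxiliary lemma asserting that for each $i$ there is a constant $c_i$, independent of $n$, with the property that any tableau $T$ having at most $\mu_i+n\cdot\pi_i-c_i$ occurrences of $i$ in its $i$-th row must satisfy $M_T=\overline{0}$. To prove it I would sum the first $i$ column-sum conditions of $\mathcal{M}\mathcal{P}_{\mu+n\cdot\pi,N}$, insert the universal bound $\sum_{j\le i}\rho_j(T)\le\sum_{j\le i}(\mu_j+n\cdot\pi_j)$ for every tableau together with the sharper bound (smaller by $c_i$) for the offending one, and conclude that $|M_T|\cdot c_i\le\sum_{j\le i}(|\lambda|\cdot\mu_j-\nu_j)$; taking $c_i$ strictly larger than the right-hand side forces $|M_T|=0$.

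It then remains to verify that, once $n$ is large, every new tableau of shape $\mu+(n+1)\cdot\pi$ violates the hypothesis of this lemma, equivalently that any tableau satisfying the condition ``for all $i$ the number of $i$'s in row $i$ exceeds $\mu_i+(n+1)\cdot\pi_i-c_i$'' already lies in the image of $\tau_n$. The hard part will be building the pre-image, and for this I would pass to Gelfand--Tsetlin patterns, under which $\tau_n$ becomes the shift $GF(T)=(x_{ij})\mapsto(x_{ij}+\pi_j)$, so the natural candidate pre-image of $(y_{ij})$ is $x_{ij}=y_{ij}-\pi_j$. The real obstacle is checking that this candidate is again a valid non-negative Gelfand--Tsetlin pattern: the monotonicity $x_{i+1,j}\ge x_{ij}$ is automatic, but the diagonal inequalities $x_{ij}\ge x_{i+1,j+1}$ in the case $\pi_j>\pi_{j+1}$, and the positivity $x_{ii}\ge 0$, are precisely where $n$ must be taken large. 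I would lower-bound $y_{ij}$ by the count of $i$'s alone via the condition above, namely $y_{ij}\ge\mu_j+(n+1)\cdot\pi_j-c_j$, and upper-bound $y_{i+1,j+1}$ by the $(j+1)$-th row length $\mu_{j+1}+(n+1)\cdot\pi_{j+1}$, reducing both to explicit linear thresholds on $n$. Once these thresholds are met the pre-image exists, so new tableaux fail the condition, the lemma kills their columns, $\varphi_n$ becomes bijective, and the stability follows; the corresponding statement for $a_{\lambda,\mu+n\cdot\pi}^{\nu+n\cdot|\lambda|\cdot\pi}$ is then immediate from Lemma~\ref{a in b}.
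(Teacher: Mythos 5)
Your proposal is correct and follows essentially the same route as the paper: it reproduces the paper's proof of Theorem \ref{R2} (the padding map $\varphi_n$, the auxiliary lemma on the constants $c_i$, and the Gelfand--Tsetlin pre-image construction with the same three inequality checks), and then deduces the statement for the coefficients $a_{\lambda,\mu+n\cdot\pi}^{\nu+n\cdot|\lambda|\cdot\pi}$ from Lemma \ref{a in b}, exactly as the paper does. The only remark is that the bulk of your argument re-derives Theorem \ref{R2}, which could simply be cited; the content of the observation is just the final step, using that $|\lambda+\omega(\sigma)|=|\lambda|$ so each term of the finite alternating sum stabilizes.
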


\subsection{Bounds}

In this section we compare the bounds obtained in the previous arguments with previously known bounds derived using other methods.  

We summarize the bounds obtained by Thibon and Carr\'e and by Brion in \cite{CaTh92} and \cite{Br93}, respectively. 
\begin{prop}\label{OtherBounds}
\hfill
\begin{enumerate}
 \item[$(P1)$] In \cite[Theorem 4.2]{CaTh92}, it is proved that the sequence of plethysm coefficients
 \begin{eqnarray*}
  a_{\lambda +(n), \mu}^{\nu + (|\mu| \cdot n)} = \left\langle s_{\lambda +(n)}[s_{\mu}], s_{\nu + (|\mu|\cdot n)} \right\rangle
 \end{eqnarray*}
is identically zero for 
 \begin{eqnarray*}
 n \geq \frac{|\overline{\nu}| \cdot |\mu| }{|\overline{\mu}|} - \nu_1 .
 \end{eqnarray*}
 when $\ell(\mu) > 1$.
 On the other case, if $\ell(\mu)=1$, then the sequence is constant for 
 \begin{eqnarray*}
 n \geq \max \left\{|\overline{\lambda}| + \lambda_2 -2- \left\lfloor \frac{\nu_1}{|\mu|}\right\rfloor ,\ \lambda_2 - 1 + |\overline{\nu}| - \left\lfloor \frac{|\overline{\lambda}| + \nu_1}{|\mu|}\right\rfloor \right\} .
 \end{eqnarray*}
 
 \item[$(Q1)$] In \cite[Theorem, Section 3.1]{Br93}, it is proved that the sequence of general term
  \begin{eqnarray*}
  a_{\lambda +(n), \mu}^{\nu + n \cdot \mu} = \left\langle s_{\lambda + (n)}[s_{\mu }], s_{\nu + n \cdot \mu} \right\rangle
 \end{eqnarray*}
is constant when 
\begin{eqnarray*}
n \geq \lambda_2 - |\lambda| + \left| |\lambda| \cdot \mu - \nu\right| .
\end{eqnarray*}

 \item[$(R1)$] In \cite[Theorem 4.1]{CaTh92}, it is proved that the sequence with general term
 \begin{eqnarray*}
  a_{\lambda, \mu + (n)}^{\nu + (|\lambda| \cdot n)} = \left\langle s_{\lambda}[s_{\mu + (n)}], s_{\nu + (|\lambda|\cdot n)} \right\rangle
 \end{eqnarray*}
 is constant for 
  \begin{eqnarray*}
  n \geq  |\lambda | \cdot \mu_1 + \mu_2 - \nu_1 -1 - \left\lfloor \frac{\nu_1}{|\lambda |} \right\rfloor .
  \end{eqnarray*}
 
 \item[$(R2)$]In \cite[Corollary 1, Section 2.6]{Br93}, it is proved that the sequence of general term
  \begin{eqnarray*}
  a_{\lambda, \mu + n\cdot \pi}^{\nu + n \cdot |\lambda| \cdot \pi} = \left\langle s_{\lambda}[s_{\mu + n\cdot \pi}], s_{\nu + n \cdot |\lambda| \cdot \pi} \right\rangle
 \end{eqnarray*}
is constant when, for all $j$,
\begin{eqnarray*}
n \geq \frac{\mu_{j+1} - \mu_j + c_j}{\pi_j - \pi_{j+1}}
\end{eqnarray*}
where $c_j = |\lambda| \cdot (\mu_1 + \dots + \mu_j ) - (\nu_1 + \dots + \nu_j)$ .
\end{enumerate}
\end{prop}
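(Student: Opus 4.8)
Since this proposition records bounds already established in the literature rather than new ones, the plan is not to prove the four inequalities from scratch but to quote them from their sources and verify that they transcribe correctly into the conventions used in this paper. The first step is to fix a translation dictionary between the three setups involved: our own notation (partitions viewed as vectors padded with trailing zeros, the shifts $\mu + (n)$ and $\mu + n\cdot\pi$, and the truncation $\overline{\lambda}$ obtained by deleting the first part), the vertex-operator formalism of Carr\'e--Thibon, and the geometric formalism of Brion. Once this dictionary is fixed, items $(P1)$ and $(R1)$ are read off from \cite[Theorems 4.2 and 4.1]{CaTh92} and items $(Q1)$ and $(R2)$ from \cite[Section 3.1]{Br93} and \cite[Corollary 1, Section 2.6]{Br93}, respectively.

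For $(Q1)$ and $(R2)$ the work is to check that Brion's stability statements, originally phrased as eventual constancy of multiplicities of sections of homogeneous vector bundles, specialize to our plethysm coefficients through $a_{\lambda,\mu}^\nu = \left\langle s_\lambda[s_\mu], s_\nu\right\rangle$, and that after matching the data $(\lambda,\mu,\nu,\pi)$ the explicit thresholds come out as displayed, namely $n \geq \lambda_2 - |\lambda| + \left|\, |\lambda|\cdot\mu - \nu \,\right|$ for $(Q1)$ and $n \geq (\mu_{j+1} - \mu_j + c_j)/(\pi_j - \pi_{j+1})$ with $c_j = |\lambda|\cdot(\mu_1 + \cdots + \mu_j) - (\nu_1 + \cdots + \nu_j)$ for $(R2)$. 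For $(P1)$ and $(R1)$ the delicate point is the floor functions $\lfloor \nu_1/|\mu|\rfloor$, $\lfloor(|\overline{\lambda}| + \nu_1)/|\mu|\rfloor$ and $\lfloor \nu_1/|\lambda|\rfloor$: these are produced by the vertex-operator computation and have no obvious combinatorial counterpart, so I would reproduce them verbatim under the change of notation and keep the case split $\ell(\mu) > 1$ versus $\ell(\mu) = 1$ exactly as in \cite{CaTh92}.

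The main obstacle is bookkeeping rather than mathematics: the two source papers use mutually incompatible conventions (partitions versus compositions, different indexing of the shift parameter, and the floor-versus-no-floor phrasing), so the real effort is to produce one consistent dictionary under which all four inequalities simultaneously come out correct, and in particular stated in the same variables as our Theorems \ref{Q1}, \ref{P1}, \ref{R1} and \ref{R2}. This consistency is essential, since the sole purpose of the statement is to permit a side-by-side comparison with the bounds derived combinatorially above, and any residual normalization mismatch would corrupt that comparison even while each bound remains correct in its own source. An alternative, more self-contained route would be to re-derive the four thresholds directly from the integer-point description of Proposition \ref{coeffs pleth}, sharpening the injection/surjection arguments of the previous subsections; there the hard part would be recovering the floor terms of $(P1)$ and $(R1)$, which reflect an arithmetic refinement that the coarse counting estimates used above do not immediately detect.
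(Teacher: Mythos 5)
Your approach matches the paper's: Proposition \ref{OtherBounds} is a survey of results from \cite{CaTh92} and \cite{Br93}, and the paper likewise offers no independent proof, only the remark that Brion's bounds for $(Q1)$ and $(R2)$ transfer directly while the Carr\'e--Thibon bounds for $(P1)$ and $(R1)$ require adaptation of notation. Your plan of fixing a translation dictionary and quoting the four thresholds verbatim is exactly what is done here, and your optional self-contained rederivation is not attempted in the paper.
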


\begin{observation}
The bounds obtained in the results by Brion, $(Q1), (R2)$, can be found directly in \cite{Br93}. On the other hand, the bounds obtained in the results of Thibon and Carr\'e papers can be found in \cite{CaTh92}, but they need some adaptations due to the notation they used. 
\end{observation}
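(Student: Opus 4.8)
The plan is to read Proposition~\ref{OtherBounds} as a compilation of four bounds already established in \cite{CaTh92} and \cite{Br93}, so that justifying it reduces to locating each bound in its source and, for the two drawn from \cite{CaTh92}, performing the change of conventions that converts Carr\'e and Thibon's formulation into ours. I would split the verification into two blocks according to the reference.

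For the two bounds attributed to Brion, $(Q1)$ and $(R2)$, no translation should be needed. The bound $(Q1)$ is the content of \cite[Theorem, Section 3.1]{Br93} and $(R2)$ is \cite[Corollary 1, Section 2.6]{Br93}, and in both cases Brion states the threshold past which the corresponding sequence of plethysm coefficients is constant. Here the step is simply to quote the two statements and check, symbol by symbol, that $n \geq \lambda_2 - |\lambda| + \bigl| |\lambda|\cdot\mu - \nu \bigr|$ and $n \geq (\mu_{j+1} - \mu_j + c_j)/(\pi_j - \pi_{j+1})$, with $c_j = |\lambda|(\mu_1 + \cdots + \mu_j) - (\nu_1 + \cdots + \nu_j)$, reproduce Brion's expressions once his indexing of the sequences is matched with the partitions $\lambda$, $\mu$, $\nu$, $\pi$ used here.

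For the two bounds attributed to Carr\'e and Thibon, $(P1)$ and $(R1)$, the real work lies in the notational adaptation, and this is the step I expect to be the main obstacle. These bounds are established via vertex operators in \cite[Theorem 4.2]{CaTh92} and \cite[Theorem 4.1]{CaTh92}, and their conventions differ from ours in how the stabilizing sequences are parametrized and in the role played by the truncated partitions. I would first fix a dictionary: identify their parametrization of $a_{\lambda+(n),\mu}^{\nu+(|\mu|\cdot n)}$ and $a_{\lambda,\mu+(n)}^{\nu+(|\lambda|\cdot n)}$ with the present ones, and read each overlined symbol as the partition with its first part removed, so that $|\overline{\lambda}| = |\lambda| - \lambda_1$, $|\overline{\nu}| = |\nu| - \nu_1$, and likewise for $\mu$. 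Under this dictionary their vanishing threshold for $\ell(\mu) > 1$ becomes $n \geq |\overline{\nu}|\cdot|\mu|/|\overline{\mu}| - \nu_1$, and their constancy thresholds for $\ell(\mu) = 1$ and for $(R1)$ become the stated maximum and floor-function expressions. The delicate point is that these bounds carry floor functions, so the verification is not purely mechanical: one must track which part each overline removes and confirm that the rounding in $\lfloor \nu_1/|\mu| \rfloor$, $\lfloor (|\overline{\lambda}| + \nu_1)/|\mu| \rfloor$ and $\lfloor \nu_1/|\lambda| \rfloor$ agrees with Carr\'e and Thibon's own rounding rather than introducing an off-by-one shift. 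Once the dictionary is fixed and these floor comparisons are checked, each of the four cases follows by direct inspection, completing the justification.
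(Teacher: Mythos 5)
Your proposal is correct and matches the paper's implicit justification: the observation is a bibliographic claim, and the paper likewise treats the Brion bounds $(Q1)$, $(R2)$ as direct quotations of \cite[Theorem, Section 3.1]{Br93} and \cite[Corollary 1, Section 2.6]{Br93}, while the Carr\'e--Thibon bounds $(P1)$, $(R1)$ require exactly the notational dictionary you describe (reading each overlined partition as the partition with its first part removed, so $|\overline{\lambda}| = |\lambda| - \lambda_1$, and tracking the floor functions). Your attention to possible off-by-one shifts in the rounding is the right point of care, and it is consistent with how the paper uses $\overline{\lambda}$, $\overline{\mu}$, $\overline{\nu}$ elsewhere.
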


In our case, we have obtained the following bounds for the $h-$plethysm coefficients, $b_{\lambda \mu}^\nu$, in  the Theorems \ref{Q1}, \ref{P1}, \ref{R1} and \ref{R2}.

\begin{cor}\label{CorBounds}
\hfill
\begin{enumerate}
 \item[(P1)] Consider the sequence $\left( b_{\lambda + (n), \mu}^{\nu + n\cdot \mu}\right)$. Then,
  \begin{itemize}
   \item When $\ell(\mu)=1$, the sequence is constant for $n> |\overline{\nu}|- \lambda_1 -1$.
   \item When $\ell(\mu) >1$, the sequence has limit zero, once $n$ is bigger than $\frac{\mu_1 \cdot |\lambda| - \nu_1}{|\overline{\mu}|}$.
  \end{itemize}
 
 \item[(Q1)]  The sequence $\left( b_{\lambda +(n), \mu}^{\nu + n \cdot |\mu|}\right)$ stabilizes for $n \geq |\lambda| \cdot \|\mu\| - \|\nu\| - \lambda_1$. 
 
 \item[(R1)] The sequence $\left( b_{\lambda, \mu + (n)}^{\nu + (|\lambda|\cdot n)} \right)$ is constant for $n \geq \mu_1 \cdot (|\lambda| - 1) + \mu_2 - \nu_1$.
 
 \item[(R2)] The sequence of general term $b_{\lambda, \mu + n\cdot \pi}^{\nu + n\cdot |\lambda| \cdot \pi}$ is stable for $n \geq \frac{\mu_{j+1} - \mu_j + c_j}{\pi_j - \pi_{j+1}}$, for all $j$, where the $c_j$ are some fixed constants such that \newline
 $c_j > \sum_{i=1}^j \left( |\lambda | \cdot \mu_i - \nu_i \right)$.
\end{enumerate}
\end{cor}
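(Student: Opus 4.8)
The plan is to observe that Corollary \ref{CorBounds} introduces no new content: each of its four bounds is exactly the numerical threshold on $n$ that already appeared inside the surjectivity (or emptiness) step of the corresponding theorem, but was never isolated as a statement there. I would therefore present the proof as four short extractions, in each case returning to the last inequality of the relevant argument and solving it for $n$. Note that all four items concern the $h$-plethysm coefficients $b_{\lambda\mu}^\nu$ directly, so no Jacobi--Trudi alternation is involved and the thresholds come straight from the polytope/matrix arguments.

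For (Q1) I would return to inequality (\ref{eq54}) in the proof of Theorem \ref{Q1}, which reads $\|\nu\| + (n+1) - \|\mu\|\cdot|\lambda| + \lambda_1 \leq m_{1T_1}$; the map $\varphi_n$ is surjective as soon as its left-hand side is positive, i.e. once $n \geq |\lambda|\cdot\|\mu\| - \|\nu\| - \lambda_1$, which is the claimed bound. For (P1) I would keep the split by $\ell(\mu)$ from Theorem \ref{P1}: when $\ell(\mu)=1$ the condition $m_{1T_1}>0$ holds once $n \geq |\overline{\nu}| - \lambda_1 - 1$, and when $\ell(\mu)>1$ the system with first equation (\ref{FirstColumnCond}) has no solution once $n > \frac{\mu_1\cdot|\lambda| - \nu_1}{|\mu| - \mu_1}$; rewriting $|\mu| - \mu_1 = |\overline{\mu}|$ gives the stated threshold for the vanishing of the coefficient. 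For (R1) I would read off from the proof of Theorem \ref{R1} the inequality $S' \leq \frac{\mu_1\cdot|\lambda| - \nu_1}{\mu_1 + n + 1 - \mu_2}$ and observe that the nonnegative integer $S'$ is forced to vanish as soon as the right-hand side falls below $1$, that is, once $n \geq \mu_1\cdot(|\lambda|-1) + \mu_2 - \nu_1$.

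The case (R2) is the one that genuinely needs attention. Here I would first fix, via Lemma \ref{AuxLem}, constants $c_j$ with $c_j > \sum_{i=1}^{j}(|\lambda|\cdot\mu_i - \nu_i)$, so that every tableau whose $i$-th row carries fewer than $\mu_i + n\pi_i - c_i$ copies of $i$ contributes a zero column. I would then recall that a tableau $T$ satisfying condition $(\star)$ admits a pre-image exactly when its Gelfand--Tsetlin pattern $(y_{ij})$ has a valid pre-image $(x_{ij}) = (y_{ij} - \pi_j)$, and trace the three inequalities defining such a pattern. Two of them are automatic or reduce to the pattern property; the binding one is $x_{ij} \geq x_{i+1,j+1}$ in those rows where $\pi_j > \pi_{j+1}$, which, after inserting the lower bound (\ref{Bound1}) and upper bound (\ref{Bound2}), holds precisely when $n \geq \frac{\mu_{j+1} - \mu_j + c_j}{\pi_j - \pi_{j+1}}$, while $x_{ii}\geq 0$ yields only the milder $n \geq \frac{c_i - \mu_i}{\pi_i}$. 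Taking the maximum over the relevant indices gives the bound of (R2).

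The main obstacle is (R2): unlike the other three items, whose thresholds fall out of a single scalar inequality, here the bound is a family of inequalities indexed by $j$, and one must simultaneously keep track of the Lemma \ref{AuxLem} constants $c_j$ and of the dichotomy $\pi_j = \pi_{j+1}$ versus $\pi_j > \pi_{j+1}$ in the Gelfand--Tsetlin inequalities. I expect the difficulty to be purely organizational rather than conceptual, and I would therefore be explicit that the stated threshold is to be read as the maximum over those indices $j$ with $\pi_j > \pi_{j+1}$.
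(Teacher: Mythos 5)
Your proposal is correct and matches the paper's (implicit) justification exactly: Corollary~\ref{CorBounds} is simply a collection of the explicit thresholds already derived in the proofs of Theorems~\ref{Q1}, \ref{P1}, \ref{R1} and \ref{R2}, and your four extractions reproduce those inequalities faithfully, including the case split on $\ell(\mu)$ and the $\pi_j>\pi_{j+1}$ dichotomy in (R2). The only point worth flagging is editorial rather than mathematical: the superscripts displayed in items (P1) and (Q1) of the corollary appear to be swapped relative to the theorems whose bounds they quote, and your assignment of bounds to theorems is the correct one.
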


Then, we have the following result for the plethysm coefficients $a_{\lambda(n),\mu(n)}^{\nu(n)}$.
\begin{cor}\label{MyBounds}
 The bounds for the $h-$plethysm coefficients obtained in Corollary \ref{CorBounds} are also the bounds for the plethysm coefficients $a_{\lambda(n),\mu(n)}^{\nu(n)}$ corresponding to the stability properties $(P1)$, $(Q1)$, $(R1)$ and $(R2)$. 
\end{cor}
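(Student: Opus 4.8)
The plan is to pass from each plethysm coefficient to its Jacobi--Trudi expansion in $h$-plethysm coefficients and then control every resulting term with the appropriate result among Theorems \ref{Q1}, \ref{P1}, \ref{R1}, \ref{R2}. Concretely, fix one of the four families, say $a_{\lambda(n),\mu(n)}^{\nu(n)}$, and apply Lemma \ref{a in b} to the two \emph{outer} partitions on which the Jacobi--Trudi identity acts (the lower-left $\lambda$ and the upper $\nu$; the inner shape is left untouched). This writes
\begin{eqnarray*}
a_{\lambda(n),\mu(n)}^{\nu(n)} = \sum_{\sigma,\tau}\varepsilon(\sigma)\varepsilon(\tau)\, b_{\lambda(n)+\omega(\sigma),\,\mu(n)}^{\nu(n)+\omega(\tau)},
\end{eqnarray*}
a finite $\mathbb{Z}$-linear combination over $\sigma\in\mathfrak{S}_N$, $\tau\in\mathfrak{S}_{N'}$. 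The first thing I would record is that the shift by $n$ commutes with the Jacobi--Trudi perturbation: since $(n)$, $n\cdot\mu$, $(|\mu|\cdot n)$ and $n\cdot|\lambda|\cdot\pi$ are all added in fixed coordinates while $\omega(\sigma),\omega(\tau)$ are fixed vectors, one has for instance $(\lambda+(n))+\omega(\sigma)=(\lambda+\omega(\sigma))+(n)$ and $(\nu+n\cdot\mu)+\omega(\tau)=(\nu+\omega(\tau))+n\cdot\mu$. Hence each summand is, as a sequence in $n$, exactly of the type treated in the corresponding theorem, but with $\lambda$ replaced by $\lambda+\omega(\sigma)$ and $\nu$ by $\nu+\omega(\tau)$ (the inner shift is unchanged).

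The second step is to apply the relevant theorem termwise: each summand becomes constant for $n\geq n_{\sigma,\tau}$, where $n_{\sigma,\tau}$ is the bound of Corollary \ref{CorBounds} evaluated at the perturbed data. Since a finite alternating sum of eventually constant sequences is eventually constant from the largest individual threshold, $a_{\lambda(n),\mu(n)}^{\nu(n)}$ is constant for $n\geq \max_{\sigma,\tau} n_{\sigma,\tau}$. It therefore remains only to prove that this maximum is attained at $\sigma=\tau=\mathrm{id}$, which reproduces precisely the stated bound.

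The heart of the argument is a single monotonicity fact: for every permutation $\tau$ and every $j$,
\begin{eqnarray*}
\sum_{i=1}^{j}\omega(\tau)_i=\sum_{i=1}^{j}(\tau(i)-i)=\Big(\sum_{i=1}^{j}\tau(i)\Big)-\binom{j+1}{2}\geq 0,
\end{eqnarray*}
because $\tau(1),\dots,\tau(j)$ are $j$ distinct positive integers and so sum to at least $1+\cdots+j$. In particular $\omega(\sigma)_1=\sigma(1)-1\geq 0$, and, writing $\|\alpha\|=|\alpha^+|$ as the sum of the partial sums of $\alpha$, also $\|\omega(\tau)\|\geq 0$; all of these vanish exactly for the identity. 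I would then feed these inequalities into each of the four bounds. For $(Q1)$ one uses $|\lambda+\omega(\sigma)|=|\lambda|$ (as $\sum_i\omega(\sigma)_i=0$) together with the linearity of $\|\cdot\|$ to rewrite $n_{\sigma,\tau}=|\lambda|\cdot\|\mu\|-\|\nu\|-\|\omega(\tau)\|-\lambda_1-\omega(\sigma)_1\leq n_{\mathrm{id},\mathrm{id}}$; for $(P1)$ and $(R1)$ only $\nu_1$ (hence $\overline{\nu}$) and $\lambda_1$ are shifted, and $\tau(1)-1,\sigma(1)-1\geq 0$ give the inequality; for $(R2)$ the constants $c_j$ may be kept unchanged because the partial sums $\sum_{i=1}^j(|\lambda|\cdot\mu_i-\nu_i)$ only decrease under $\nu\mapsto\nu+\omega(\tau)$. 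The main obstacle I anticipate is bookkeeping rather than genuine difficulty: one must fix $N=N'$ so that the fake-length hypothesis of Theorem \ref{Q1} survives the perturbation, discard the terms in which $\lambda+\omega(\sigma)$ or $\nu+\omega(\tau)$ acquires a negative part (where the corresponding $h$ vanishes, so they contribute nothing), and, in $(R2)$, treat separately the indices $j$ with $\pi_j=\pi_{j+1}$, which impose no constraint at all.
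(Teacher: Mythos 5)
Your proposal is correct and follows exactly the paper's route: decompose $a_{\lambda(n),\mu(n)}^{\nu(n)}$ via Lemma \ref{a in b}, apply Theorems \ref{Q1}, \ref{P1}, \ref{R1}, \ref{R2} termwise, and dominate each perturbed threshold by the unperturbed one using precisely the four facts the paper lists ($\omega_1(\tau)\geq 0$, $|\lambda+\omega(\sigma)|=|\lambda|$, $\sum_{i=1}^j\omega_i(\tau)\geq 0$, $\|\omega(\tau)\|\geq 0$, all attaining $0$ at the identity). You have simply written out in full the argument the paper leaves implicit, including the bookkeeping about negative parts and fake lengths.
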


The proof follows from the following properties:
\begin{enumerate}
\item $\min_{\tau \in \mathfrak{S}_{N^\prime}} \{\omega_1(\tau) \}=0$.

\item $\left|\lambda + \omega(\sigma)\right| = |\lambda| $, for any $\sigma \in \mathfrak{S}_{N} $.

\item $\min_{\tau \in \mathfrak{S}_{N^\prime}} \left\{ \sum_{i=1}^j \omega_i(\tau) \right\} = 0$. 

\item $\min_{\tau \in \mathfrak{S}_{N^\prime}} \left\{ \| \omega(\tau) \| \right\}=0$.
\end{enumerate}

Now we are able to compare both lists of bounds.
\begin{cor}
The following list summarizes the comparison between the bounds obtained in Proposition \ref{OtherBounds} and in Corollary \ref{MyBounds}, after a deep study of the inequalities involved.  
\begin{itemize}
\item[(P1)] If $\ell(\mu)>1$ and if $\mu=(1)$, our bound is better or equal than the bound obtained by Thibon and Carr\'e. In case of $\mu=(m)$, with $m>1$, we have that our bound is better or equal if we are in one of the following cases:
\begin{enumerate}
\item The partition $\nu$ satisfies that $|\overline{\nu}| \leq \frac{|\overline{\lambda}|\cdot (m+1)}{m} -1$ and $|\overline{\nu}| \leq \frac{m\cdot \lambda_2-1}{m-1}$.
\item The partition $\nu$ satisfies that $|\overline{\nu}| > \frac{|\overline{\lambda}|\cdot (m+1)}{m} -1$ and $\nu_1 \leq m\cdot (\lambda_1 + \lambda_2+1) - |\overline{\lambda}|-1$.
\end{enumerate}
\item[(Q1)] The bound obtained by Brion is better or equal than our bound. 

\item[(R1)] Our bound is better than the bound obtained by Thibon and Carr\'e if $\nu_1 \leq |\lambda| \cdot \mu_1-1$. 

\item[(R2)] Both bounds are the same.  
\end{itemize}
\end{cor}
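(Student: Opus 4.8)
The plan is to prove each of the four comparisons by directly confronting the two explicit thresholds collected in Proposition~\ref{OtherBounds} and in Corollary~\ref{MyBounds}, forming their difference and determining its sign. Since both lists record closed-form stabilization points (possibly involving a floor function or an absolute value), no representation theory is needed beyond unwinding the notation $|\overline{\alpha}|$ and $\|\alpha\|$; the whole argument is an inequality analysis carried out case by case, where the smaller threshold is the ``better'' one.

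I would first dispose of (R2), where both sides have the shape $\frac{\mu_{j+1}-\mu_j+c_j}{\pi_j-\pi_{j+1}}$. Brion's constant is $c_j=|\lambda|\cdot(\mu_1+\dots+\mu_j)-(\nu_1+\dots+\nu_j)$, whereas ours is any $c_j$ strictly exceeding $\sum_{i=1}^j(|\lambda|\mu_i-\nu_i)$; as the latter sum is exactly Brion's $c_j$, the infimum of admissible thresholds coincides and the bounds agree. For (R1) I would subtract our bound $\mu_1(|\lambda|-1)+\mu_2-\nu_1$ from the Thibon--Carr\'e bound $|\lambda|\mu_1+\mu_2-\nu_1-1-\lfloor\nu_1/|\lambda|\rfloor$; the difference collapses to $\mu_1-1-\lfloor\nu_1/|\lambda|\rfloor$, which is nonnegative exactly when $\lfloor\nu_1/|\lambda|\rfloor\leq\mu_1-1$, that is $\nu_1\leq|\lambda|\mu_1-1$, giving the stated criterion.

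For (Q1) I would set $a_j=|\lambda|\mu_j-\nu_j$, so that the standing hypothesis $|\lambda|\cdot|\mu|=|\nu|$ forces $\sum_j a_j=0$. Using the cumulative-sum description of $\|\cdot\|$, our threshold reads $\sum_k(a_1+\dots+a_k)-\lambda_1$, while Brion's reads $\lambda_2-|\lambda|+\sum_j|a_j|$. The assertion that Brion's bound is the smaller one then reduces to comparing the partial-sum functional of the zero-sum vector $(a_j)$ against its $\ell^1$ norm, corrected by the two terms in $\lambda$; I expect this to follow by standard estimates once the signs of the partial sums are tracked.

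The hard part will be (P1), which branches on $\ell(\mu)$ and, inside $\ell(\mu)=1$, on whether $\mu=(1)$ or $\mu=(m)$ with $m>1$; moreover the Thibon--Carr\'e threshold there is a maximum of two expressions, each carrying a floor. My strategy is to compare our single threshold against each argument of the $\max$ separately: in the vanishing regime $\ell(\mu)>1$ I would verify $\frac{\mu_1|\lambda|-\nu_1}{|\overline{\mu}|}\leq\frac{|\overline{\nu}||\mu|}{|\overline{\mu}|}-\nu_1$ directly, and for $\ell(\mu)=1$ I would split along the two regimes on $|\overline{\nu}|$ named in the statement, bounding the floor terms from the appropriate side to isolate conditions (1) and (2) on $\nu$. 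The genuine obstacle is the bookkeeping of these floor functions together with the case split induced by the $\max$; it is precisely this interplay that produces the delicate hypotheses on $\nu$, while everything surrounding it is routine arithmetic.
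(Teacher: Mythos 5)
The paper itself offers no proof of this corollary --- it only asserts the four comparisons ``after a deep study of the inequalities involved'' --- so your overall strategy (direct subtraction of the explicit thresholds from Proposition~\ref{OtherBounds} and Corollary~\ref{MyBounds}, case by case) is the only available route and is the right one. Your treatments of (R1) and (R2) are complete and correct: for (R1) the difference does collapse to $\mu_1 - 1 - \lfloor \nu_1/|\lambda|\rfloor$, which is nonnegative exactly when $\nu_1 \le |\lambda|\mu_1 - 1$, and for (R2) the two families of thresholds have identical form with our admissible constants $c_j$ bounded below by exactly Brion's value.

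The gap is that (Q1) and (P1) --- the only two items with real content --- are deferred rather than proved. For (Q1), after setting $a_j = |\lambda|\mu_j - \nu_j$ with $\sum_j a_j = 0$ and $A_k = a_1 + \cdots + a_k$, the claim ``Brion is better or equal'' becomes $\sum_j |a_j| - \sum_k A_k \le |\lambda| - \lambda_1 - \lambda_2$, and this is \emph{not} a standard estimate: for the zero-sum vector $a = (1,-1)$ one has $\sum_j|a_j| - \sum_k A_k = 1$ while $|\lambda| - \lambda_1 - \lambda_2$ can be $0$ (e.g.\ $\lambda = (2,1)$, $\mu = (2,1)$, $\nu = (5,4)$), so the inequality reverses. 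Such triples are exactly those where some $A_k < 0$, i.e.\ where $\nu$ is not dominated by $|\lambda|\cdot\mu$ and the coefficient vanishes identically; so any honest proof of (Q1) must invoke the support conditions of the second proposition of Section~\ref{sec:prew} (or restrict the comparison to admissible triples) and then control $\sum_j|a_j| = 2\sum_{a_j>0} a_j$ against $\sum_k A_k$ under the hypothesis $A_k \ge 0$ for all $k$. Your proposal does not identify this necessary input. Similarly, for (P1) you only name the strategy (compare against each argument of the $\max$, bound the floors); since the two conditions on $\nu$ in the statement are precisely the output of that bookkeeping, stating the plan without carrying it out proves nothing. In short: the approach is sound and half the corollary is established, but the two cases that required the ``deep study'' remain open in your write-up, and in (Q1) the step you call routine is false without an additional hypothesis you have not stated.
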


\section*{Final Remarks}

This new approach provides a new proof of stability properties for plethysm coefficients using elementary tools of symmetric functions.  Furthermore, it enhanced the importance of other constants (the $b_{\lambda, \mu}^\nu$ in this paper) that seem interesting by themselves. In particular, it should be possible to evaluate them efficiently, by means of Barvinok's algorithm. Could this lead to more efficient algorithms for computing the plethysm coefficients? Could this approach help us to prove more general stability properties? 

We want to thank Mercedes Rosas, Emmanuel Briand, and Mateusz Micha{\l}ek for some interesting observations and comments. 

\newpage
\bibliographystyle{alpha}
\bibliography{Referencias_Articulos}
\label{sec:biblio}

\end{document}